\newcommand{\C}{{\mathbb C}}
\newcommand{\fa}{{\mathfrak a}}
\newcommand{\fb}{{\mathfrak b}}
\newcommand{\fg}{{\mathfrak g}}
\newcommand{\fh}{{\mathfrak h}}
\newcommand{\fn}{{\mathfrak n}}
\newcommand{\ofn}{\overline{\mathfrak n}}
\newcommand{\Hom}{{\rm Hom}}
\newcommand{\Ext}{{\rm Ext}}
\newcommand{\Span}{{\rm Span}}
\newcommand{\im}{{\rm im}}
\newcommand{\Res}{{\rm Res}}
\newcommand{\tto}{\twoheadrightarrow}
\newcommand{\U}{{\rm U}}
\newcommand{\oa}{\overline{0}}
\newcommand{\ob}{\overline{1}}
\newcommand{\cA}{{\mathcal A}}
\newcommand{\cP}{{\mathcal P}}
\newcommand{\cO}{{\mathcal O}}
\newcommand{\cR}{{\mathcal R}}
\newcommand{\ch}{\rm{ch}}
\begin{document}

\title*{Homological algebra for osp(1/2n)}
\author{Kevin Coulembier}
\institute{Kevin Coulembier \at Department of Mathematical Analysis, Ghent University, Krijgslaan 281, 9000 Gent, Belgium, \email{coulembier@cage.ugent.be}}
%
%
\maketitle

\abstract{We discuss several topics of homological algebra for the Lie superalgebra $\mathfrak{osp}(1|2n)$. First we focus on Bott-Kostant cohomology, which yields classical results although the cohomology is not given by the kernel of the Kostant Laplace operator. Based on this cohomology we can derive strong Bernstein-Gelfand-Gelfand resolutions for finite dimensional $\mathfrak{osp}(1|2n)$-modules. Then we state the Bott-Borel-Weil theorem which follows immediately from the Bott-Kostant cohomology by using the Peter-Weyl theorem for $\mathfrak{osp}(1|2n)$. Finally  we calculate the projective dimension of irreducible and Verma modules in the category $\cO$.}

\section{Introduction}

The Lie superalgebra $\mathfrak{osp}(1|2n)$ plays an exceptional role in the theory of Lie superalgebras, see \cite{MR0486011}. Contrary to the other simple finite dimensional Lie superalgebras the Harish-Chandra map yields an isomorphism $Z(\fg)\cong S(\fh)^W$. Closely related to this observation is the fact that the category of finite dimensional representations is semisimple. In other words, all integral dominant highest weights are typical and every finite dimensional representation is completely reducible. As a consequence the algebra of regular functions on a Lie supergroup with superalgebra $\mathfrak{osp}(1|2n)$ satisfies a Peter-Weyl decomposition. 

Because of these extraordinary properties, the algebra $\mathfrak{osp}(1|2n)$ and its representation theory is relatively well-understood, see e.g. \cite{MR1784679, MR1479886, MR0648354}. In this paper we prove that certain standard topics of homological algebra for $\mathfrak{osp}(1|2n)$ allow elegant conclusions of the classical type. In particular the remarkable connection with the Lie algebra $\mathfrak{so}(2n+1)$, see e.g. \cite{MR0648354}, is confirmed.

First we focus on cohomology of the nilradical $\fn$ of the Borel subalgebra $\fb$ with values in finite dimensional $\mathfrak{osp}(1|2n)$-representations. Since the coboundary operator commutes with the Cartan subalgebra $\fh$ these cohomology groups are $\fh$-modules. For {\em Lie algebras} it can be proved that the cohomology is isomorphic to the kernel of the Kostant Laplace operator, see \cite{MR0142696}. This operator is equivalent to an element of $S(\fh)^W$. From the results in \cite{MR0578996, Bott, MR0142696} it then follows that every weight in the kernel of the Laplace operator (or equivalently in the cohomology) only appears with multiplicity one in the space of cochains. 

For {\em Lie superalgebras} in general the kernel of the Laplace operator is larger than the cohomology groups, see \cite{BGG}, even for $\mathfrak{osp}(1|2n)$ as we will see. We will also find that the weights appearing in the cohomology groups appear inside the space of cochains with multiplicities greater than one. We compute the cohomology by quotienting out an exact subcomplex, such that the resulting complex is isomorphic to that of $\mathfrak{so}(2n+1)$.

We use this result to obtain Bott-Borel-Weil (BBW) theory for $\mathfrak{osp}(1|2n)$. The classical BBW result in \cite{Bott} computes the sheaf cohomology on line bundles over the flag manifold of a semisimple Lie group. In general it is a difficult task to compute these cohomology groups for supergroups. BBW theory for the typical blocks was obtained in \cite{MR0957752}. Important further insight was gained in \cite{MR2734963, MR1036335, MR2059616}. 

Since all blocks are typical for $\mathfrak{osp}(1|2n)$ the BBW theorem for $\mathfrak{osp}(1|2n)$ is included in the results in \cite{MR0957752}. The $\fn$-cohomology results mentioned above could then be derived from the BBW result. Here we take the inverse approach because, despite being more computational, it clearly reveals the mechanism that makes the kernel of the Laplace operator larger than the cohomology groups, here caused by non-isotropic odd roots. When the kernel of the Laplace operator coincides with the cohomology it was proved in \cite{BGG} that the irreducible modules of basic classical Lie superalgebras have a strong Bernstein-Gelfand-Gelfand (BGG) resolution (see \cite{MR0578996}). 

In this paper we prove that finite dimensional modules of $\mathfrak{osp}(1|2n)$ always possess a strong BGG resolution. As can be expected from \cite{BGG} the main difficulty is dealing with the property that the kernel of the Kostant Laplace operator is larger than the cohomology. By making extensive use of the BGG theorem for $\mathfrak{osp}(1|2n)$ of \cite{MR1479886} and our result on $\fn$-cohomology we can overcome this difficulty. Other results on BGG resolutions for basic classical Lie superalgebras were obtained in \cite{MR2600694, Cheng, BGG, BBW}.

Finally we focus on the projective dimension of structural modules in the category $\cO$ for $\mathfrak{osp}(1|2n)$. The main result is that the projective dimension of irreducible and Verma modules with a regular highest weight is given in terms of the length of the element of Weyl group making them dominant. In particular we obtain that the global dimension of a regular block in $\cO$ is $2n^2$.

The remainder of the paper is organised as follows. In Section \ref{secPrel} we introduce some notations and conventions. The cohomology groups $H^k(\fn,-)$ are calculated in Section \ref{Kostant}. This result is then used in Section \ref{secBGG} to derive BGG resolutions. In Section \ref{secBBW} the $\fn$-cohomology result is translated into a BBW theorem. In Section \ref{secpd} the projective dimensions in the category $\cO$ are calculated. Finally there are two appendices. In Appendix 1 the technical details of the computation of the $\fn$-cohomology are given. In Appendix 2 we state some facts about the BGG category $\cO$ for basic classical Lie superalgebras.

\section{Preliminaries}
\label{secPrel}
For the complex basic classical Lie superalgebra $\fg=\mathfrak{osp}(1|2n)$ we consider the simple positive roots
\[\delta_1-\delta_2,\delta_2-\delta_3,\cdots,\delta_{n-1}-\delta_n,\delta_n\]
corresponding to the standard system of positive roots, see \cite{MR0486011}. For this system, the set of even positive roots is given by 
\[\Delta^+_{\oa}=\{\delta_i-\delta_j|1\le i<j\le n\}\cup\{\delta_i+\delta_j|1\le i\le j\le n\}\]
and the set of odd positive roots by
\[\Delta^+_{\ob}=\{\delta_i|1\le i\le n\}.\]
This leads to the value $\rho=\sum_{j=1}^n(n+\frac{1}{2}-j)\delta_j$ for half the difference between the sum of even roots and the sum of odd roots.

The Cartan subalgebra of $\mathfrak{osp}(1|2n)$ is denoted by $\fh$. The subalgebra consisting of positive (negative) root vectors is denoted by $\fn$ ($\ofn$). The corresponding triangular decomposition is given by $\mathfrak{osp}(1|2n)=\ofn+\fh+\fn$. The Borel subalgebra is denoted by $\fb=\fh+\fn$.

The Weyl group $W$ of $\mathfrak{osp}(1|2n)$ is the same as for the underlying Lie algebra $\mathfrak{sp}(2n)$ (and isomorphic to the Weyl group of $\mathfrak{so}(2n+1)$), where the action is naturally extended to include the odd roots of $\mathfrak{osp}(1|2n)$. By the dotted action of $w\in W$ on elements $\lambda\in \fh^\ast$ we mean the $\rho$-shifted action: $w\cdot\lambda=w(\lambda+\rho)-\rho$. Since the Weyl group is the same as for the underlying Lie algebra, the notion of the Chevallay-Bruhat ordering and the length $|w|=l(w)$ of an element $w\in W$, remains unchanged. However, the notion of strongly linked weights, see Section 5.1 in \cite{MR2428237} or Section 10.4 in \cite{MR2906817}, should be interpreted with respect to $\rho$ and not $\rho_{\oa}$ (half the sum of the positive roots of $\mathfrak{sp}(2n)$). Through the identification of the Weyl groups and root lattices of $\mathfrak{osp}(1|2n)$ and $\mathfrak{so}(2n+1)$, this shifted action coincides. In particular the characters of irreducible highest weights modules of $\mathfrak{osp}(1|2n)$ and $\mathfrak{so}(2n+1)$ coincide, see e.g. \cite{MR0648354}.

The set of integral dominant weights is denoted by $\cP^+\subset\fh^\ast$. For each $\lambda\in\fh^\ast$ the corresponding Verma module is denoted by $M(\lambda)=\U(\fg)\otimes_{\U(\fb)}\C_\lambda$. Where $\C_\lambda$ is the one dimensional $\fb$-module with properties $H\C_\lambda=\lambda(H)\C_\lambda$ for all $H\in\fh$ and $\fn\C_\lambda=0$. The quotient of $M(\lambda)$ with respect to its unique maximal submodule is irreducible and denoted by $L(\lambda)$. The module $L(\lambda)$ is finite dimensional if and only if $\lambda\in\cP^+$. For each $\mu\in\fh^\ast$ we denote the central character associated with it by $\chi_\mu: Z(\fg)\to\C$.

The spaces of $k$-chains for $\ofn$-homology in an $\mathfrak{osp}(1|2n)$-module $V$ are denoted by $C_k(\ofn,V)=\Lambda^k\ofn\otimes V$. These spaces are naturally $\fh+\ofn$-modules where the action is the tensor product of the adjoint action and the restricted action on the $\mathfrak{osp}(1|2n)$-module $V$. The boundary operator $\delta^\ast_k:C_k(\ofn,V)\to C_{k-1}(\ofn,V)$ is defined by
\begin{eqnarray*}
\delta^\ast_k(Y\wedge f)&=& -Y\cdot f-Y\wedge\delta^\ast_{k-1}(f)\quad\mbox{and}\quad \delta^\ast_0=0,
\end{eqnarray*}
for $Y\in\ofn$ and $f\in C_{k-1}(\ofn,V))$, see e.g. \cite{BGG}. This operator is an $\fh$-module morphism and satisfies $\delta_{k}^\ast\circ\delta_{k+1}^\ast=0$. The homology groups are defined as $H_k(\ofn,V)=\ker\delta_k^\ast/\im\delta_{k+1}^\ast$ and are naturally $\fh$-modules. A general approach to the concept of Lie superalgebra cohomology can e.g. be found in Chapter 16 in \cite{MR1479886}.

For an abelian category $\cA$, the right derived functors (see \cite{MR1269324}) of the left exact functor given by $\Hom_{\cA}(A,-)$, for $A$ an object of $\cA$, are denoted by $\Ext^k_{\cA}(A,-)$, where  $\Ext^1_{\cA}(A,-)$ is also written as $\Ext_{\cA}(A,-)$. When the category of finitely generated $\fa$-modules is considered, for some algebra $\fa$, the name of the category is replaced by $\fa$.

\section{Bott-Kostant cohomology}
\label{Kostant}

The main result of this section is the following description of the homology and cohomology of the nilradical of the Borel subalgebra of $\mathfrak{osp}(1|2n)$ or its dual, with values in irreducible representations of $\mathfrak{osp}(1|2n)$.
\begin{theorem}
\label{Kostantosp12n}
The (co)homology of $\fn$ and $\ofn$ in the irreducible finite dimensional $\mathfrak{osp}(1|2n)$-representation $L(\lambda)$ is given by
\begin{eqnarray*}
H^k(\fn,L(\lambda))=\bigoplus_{w\in W(k)} \C_{w\cdot\lambda}&& \quad H_k(\fn,L(\lambda))=\bigoplus_{w\in W(k)} \C_{-w\cdot\lambda}\\
H^k(\ofn,L(\lambda))=\bigoplus_{w\in W(k)} \C_{-w\cdot\lambda}&&\quad H_k(\ofn,L(\lambda))=\bigoplus_{w\in W(k)} \C_{w\cdot\lambda},
\end{eqnarray*}
with $W(k)$ the set of elements of the Weyl group with length $k$, see Section 0.3 in \cite{MR2428237}.
\end{theorem}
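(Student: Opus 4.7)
The plan is to reduce the four formulas to a single computation, that of $H_k(\ofn, L(\lambda))$, and then to execute that computation by comparing the $\ofn$-chain complex for $\mathfrak{osp}(1|2n)$ with the corresponding complex for $\mathfrak{so}(2n+1)$, where the classical theorem of Kostant applies. For the reduction, note that since $L(\lambda)$ is finite dimensional, duality of complexes gives $H^k(\fn, L(\lambda))^\ast \cong H_k(\fn, L(\lambda)^\ast)$; in type $C_n$ the longest Weyl group element acts as $-\id$, so $L(\lambda)^\ast \cong L(\lambda)$, and a Chevalley-type involution exchanging $\fn$ and $\ofn$ (with the appropriate super-sign) accounts for the remaining sign on weights between the $\fn$- and $\ofn$-versions. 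It therefore suffices to prove the fourth formula, $H_k(\ofn, L(\lambda)) = \bigoplus_{w \in W(k)} \C_{w\cdot\lambda}$.

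For this I would set up the chain complex with its super structure. Writing $\ofn = \ofn_{\oa} \oplus \ofn_{\ob}$, one has
\[
C_k(\ofn, L(\lambda)) = \bigoplus_{i+j=k} \Lambda^i \ofn_{\oa} \otimes S^j \ofn_{\ob} \otimes L(\lambda).
\]
The odd negative root vectors $X_{-\delta_i}$ are non-isotropic and satisfy, up to normalization, $X_{-\delta_i}^2 = \tfrac{1}{2} X_{-2\delta_i}$ inside $\U(\fg)$. Consequently each long negative root vector $X_{-2\delta_i}$ contributes twice to the chain space: once as a basis element of $\ofn_{\oa}$ and once inside $S^2 \ofn_{\ob}$ through the square of the corresponding odd generator. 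This doubling is precisely what causes the kernel of the Kostant Laplace operator to strictly exceed the cohomology in the super setting, and so what one must remove.

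The main obstacle, and the heart of the proof, is to construct an explicit $\fh$-equivariant subcomplex $K_\bullet \subseteq C_\bullet(\ofn, L(\lambda))$ which is acyclic and whose quotient is isomorphic, as a chain complex of $\fh$-modules, to the classical Kostant chain complex of $\mathfrak{so}(2n+1)$ with coefficients in the irreducible $\mathfrak{so}(2n+1)$-module $V(\lambda)$ of highest weight $\lambda$; recall from Section \ref{secPrel} that the characters of $L(\lambda)$ and $V(\lambda)$ coincide. The pairing should exploit the identification $X_{-\delta_i}^2 \sim X_{-2\delta_i}$ to match each chain involving a squared odd generator in the $S^\bullet \ofn_{\ob}$-factor with the matching chain involving the corresponding long even root vector $X_{-2\delta_i}$ in the $\Lambda^\bullet \ofn_{\oa}$-factor, arranging these pairs into acyclic two-term blocks. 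The delicate task is to verify that this prescription is compatible with the boundary $\delta^\ast$, produces a genuinely exact subcomplex, and induces on the quotient precisely the Kostant differential for $\mathfrak{so}(2n+1)$; this is what the paper defers to Appendix 1.

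Once this identification is in place, the classical Kostant theorem gives the $k$-th homology of the $\mathfrak{so}(2n+1)$-complex as $\bigoplus_{w \in W(k)} \C_{w\cdot\lambda}$. Since the Weyl groups of $\mathfrak{osp}(1|2n)$ and $\mathfrak{so}(2n+1)$ are canonically identified and the $\rho$-shifted dotted action is the same, this yields the desired formula for $H_k(\ofn, L(\lambda))$, and the remaining three statements follow from the dualities collected in the first paragraph.
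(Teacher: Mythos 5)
Your reduction to the single statement about $H_k(\ofn,L(\lambda))$ is fine, and your structural idea --- split off the monomials involving $Y_{2\delta_i}$ or $Y_{\delta_i}^{\wedge 2}$, organize them into an exact subcomplex, and quotient it out to land on a space of chains matching that of $\mathfrak{so}(2n+1)$ --- is exactly the decomposition the paper uses ($C_\bullet=A_\bullet\oplus\delta^\ast A_\bullet\oplus R_\bullet$, with $A_\bullet\oplus\delta^\ast A_\bullet$ exact). The gap is in your closing step. You assert that the quotient complex is isomorphic \emph{as a chain complex} to the classical Kostant complex of $\mathfrak{so}(2n+1)$ with coefficients in $V(\lambda)$, and you then invoke Kostant's theorem directly; you defer the verification of this to ``Appendix 1.'' But that identification of differentials is not what the appendix-level work establishes, and it is genuinely problematic: $L(\lambda)$ is an $\mathfrak{osp}(1|2n)$-module, not an $\mathfrak{so}(2n+1)$-module (only its character matches $V(\lambda)$), and the induced differential on the quotient is $\delta^\ast$ followed by projection along $A_\bullet\oplus\delta^\ast A_\bullet$, which has no classical counterpart. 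So ``the classical Kostant theorem gives the homology of the quotient'' does not follow from anything you have set up.

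The paper closes the argument differently, and you need some version of this. First, only the $\fh$-module isomorphism $R_k\cong\Lambda^k\ofn_{\mathfrak{so}(2n+1)}\otimes V(\lambda)$ is used, never an isomorphism of complexes. Second, from the general theory one knows $H_k(\ofn,L(\lambda))\subset\ker\Box$ for the Kostant Laplacian $\Box$, which is an element of $\U(\fh)$ and hence acts only through weights; after quotienting out the exact subcomplex this containment sharpens to $H_k\subset\ker\Box|_{R_k}$, and since $R_k$ has the same $\fh$-character as the classical chain space, Kostant's multiplicity-one computation gives the upper bound $H_k(\ofn,L(\lambda))\subset\bigoplus_{w\in W(k)}\C_{w\cdot\lambda}$. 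Third, the Euler characteristic identity $\sum_i(-1)^i\,\mathrm{ch}\,H_i(\ofn,L(\lambda))=\sum_{w\in W}(-1)^{|w|}e^{w\cdot\lambda}$ forces these inclusions to be equalities. Without the Laplacian bound and the Euler characteristic step (or a genuine proof that the induced differential is the classical one), your argument does not reach the conclusion.
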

One of these results implies the other three according to Lemma 6.22 in \cite{MR3012224}, Lemma 4.6 in \cite{BGG} or Theorem 17.6.1 in \cite{MR2906817}. The remainder of this section is devoted to proving the property for the $\ofn$-homology, where the more technical steps in the proof are given in Appendix 1.

For each root the corresponding space of root vectors is one dimensional. For each positive root $\alpha\in\Delta^+$, we fix one root vector with weight $-\alpha$ and denote it by $Y_\alpha\in\ofn$. We choose the normalisation such that $[Y_{\delta_i},Y_{\delta_i}]=Y_{2\delta_i}$ holds. Each element  $f\in C_d(\ofn,V)$ of the form $f=Y_{\alpha_1}\wedge\cdots \wedge Y_{\alpha_d}\otimes v$ for certain positive roots $\alpha_1,\cdots,\alpha_d$ and $v\in V$ is called a monomial. For convenience $v$ will often be considered to be a weight vector. We say that $f=Y_{\alpha_1}\wedge\cdots \wedge Y_{\alpha_d}\otimes v$ contains a monomial $Y_{\beta_1}\wedge\cdots \wedge Y_{\beta_k}\in\Lambda^k\ofn$ if $\{\beta_1,\cdots,\beta_k\}\subset\{\alpha_1,\cdots,\alpha_d\}$.

\begin{definition}
\label{DefAB}
The $\fh$-submodule of $C_\bullet(\ofn,V)$ spanned by all monomials that do not contain any $Y_{2\delta_i}$ or $Y^{\wedge 2}_{\delta_i}$ for $i\in\{1,\cdots,n\}$ is denoted by $R_\bullet(\ofn,V)$ and the subvectorspace spanned by all monomials that do contain a $Y_{2\delta_i}$ or $Y^{\wedge 2}_{\delta_i}$ is denoted by $W_\bullet(\ofn,V)$, then 
\[C_\bullet(\ofn,V)=R_\bullet(\ofn,V)\oplus W_\bullet(\ofn,V).\] 
The subspaces $A_\bullet^{(j)}$ and $B_\bullet^{(j)}$ of $W_\bullet(\ofn,V)$ are defined as
\begin{eqnarray*}
A_\bullet^{(j)}&=&\Span\{Y^{\wedge 2}_{\delta_j}\wedge f| \mbox{ f }\in C_\bullet(\ofn,V) \mbox{ contains no } Y_{2\delta_j}, \, Y_{2\delta_i} \mbox{ or } Y^{\wedge 2}_{\delta_i} \mbox{ for }i<j\}\\
B_\bullet^{(j)}&=&\Span\{Y_{2\delta_j}\wedge f| \mbox{ f }\in C_\bullet(\ofn,V) \mbox{ contains no } Y_{2\delta_i} \mbox{ or } Y^{\wedge 2}_{\delta_i} \mbox{ for }i<j\}.
\end{eqnarray*}
\end{definition}

The subspace $R_\bullet(\ofn,L(\lambda))$ for $\lambda\in\cP^+\subset\fh^\ast$ is isomorphic as an $\fh$-module to the the corresponding full spaces of chains for the nilradical of $\mathfrak{so}(2n+1)$ and the corresponding representation of $\mathfrak{so}(2n+1)$ with the same highest weight $\lambda$. In particular, $R_k(\ofn,L(\lambda))=0$ for $k>n^2$. 

Using the results in Appendix 1 we can prove that the homology of $C_\bullet(\ofn,V)$ can essentially be described in terms of $R_\bullet(\ofn,V)$. This is based on the fact that the homology of a complex does not change after quotienting out an exact complex:

\begin{proposition}
\label{subcomplex}
Let $S_\bullet\subset C_\bullet(\ofn,V)$ be an exact subcomplex (and $\fh$-submodule). The operator $d:C_\bullet(\ofn,V)/S_\bullet\to C_\bullet(\ofn,V)/S_\bullet$ canonically induced from $\delta^\ast$ satisfies 
\begin{eqnarray*}
H_\bullet(C/S)\cong\ker d_k /\im d_{k+1} \cong \ker \delta^\ast_k/\im \delta^\ast_{k+1}\cong H_\bullet(C)
\end{eqnarray*}
as $\fh$-modules.
\end{proposition}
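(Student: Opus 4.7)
The plan is to deduce the statement from the long exact sequence in homology associated to a short exact sequence of complexes, which is the standard homological algebra mechanism for showing that an acyclic subcomplex can be ignored.

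First I would check that the induced operator $d$ is well-defined on $C_\bullet(\ofn,V)/S_\bullet$. Because $S_\bullet$ is a subcomplex one has $\delta^\ast(S_k)\subseteq S_{k-1}$, so $\delta^\ast$ descends to $d_k: C_k/S_k\to C_{k-1}/S_{k-1}$ satisfying $d_{k}\circ d_{k+1}=0$. Since $S_\bullet$ is an $\fh$-submodule and $\delta^\ast$ is itself an $\fh$-module morphism, both $d$ and the projection are $\fh$-equivariant, so all objects and maps below live in the category of $\fh$-modules.

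Next I would form the short exact sequence of complexes of $\fh$-modules
\begin{equation*}
0\lr S_\bullet \lr C_\bullet(\ofn,V)\lr C_\bullet(\ofn,V)/S_\bullet\lr 0,
\end{equation*}
and write down its associated long exact sequence in homology,
\begin{equation*}
\cdots\lr H_{k+1}(C/S)\lr H_k(S_\bullet)\lr H_k(C_\bullet)\lr H_k(C/S)\lr H_{k-1}(S_\bullet)\lr\cdots,
\end{equation*}
whose connecting homomorphisms are again $\fh$-equivariant. By the hypothesis that $S_\bullet$ is an exact complex one has $H_k(S_\bullet)=0$ for every $k$, so the four-term exactness around $H_k(C_\bullet)\to H_k(C/S)$ collapses to an isomorphism of $\fh$-modules for each $k$.

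There is essentially no obstacle here beyond verifying that the constructions are compatible with the $\fh$-action, which is immediate. The only mild point is to make sure the identifications $H_\bullet(C/S)\cong \ker d_k/\im d_{k+1}$ and $H_\bullet(C)\cong \ker\delta^\ast_k/\im \delta^\ast_{k+1}$ stated in the conclusion are tautologies from the definition of homology of a chain complex, so the content is precisely the vanishing $H_\bullet(S)=0$ feeding into the long exact sequence.
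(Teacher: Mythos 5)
Your proof is correct, but it takes a different route from the paper. You invoke the long exact homology sequence of the short exact sequence of complexes $0\to S_\bullet\to C_\bullet(\ofn,V)\to C_\bullet(\ofn,V)/S_\bullet\to 0$ (working in the abelian category of $\fh$-modules, so all maps including the connecting homomorphisms are $\fh$-equivariant) and then use $H_k(S_\bullet)=0$ to collapse the segment $H_k(S)\to H_k(C)\to H_k(C/S)\to H_{k-1}(S)$ to an isomorphism. The paper instead argues by hand: it defines $\eta:\ker\delta^\ast\to\ker d$, $\eta(f)=f+S_\bullet$, checks it descends to $\widetilde\eta$ on homology, and verifies injectivity (if $f=\delta^\ast(g)+s$ with $f\in\ker\delta^\ast$ then $s\in\ker\delta^\ast\cap S_\bullet$, which by exactness of $S_\bullet$ lies in $\im\delta^\ast$, forcing $f\in\im\delta^\ast$) and surjectivity (if $\delta^\ast a=s\in S_\bullet$ then $s=\delta^\ast s_1$ for some $s_1\in S_\bullet$, and $a-s_1\in\ker\delta^\ast$ represents the same class). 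Your argument is the standard, shorter one, at the cost of citing the snake-lemma machinery (available in the paper's own reference to Weibel); the paper's version is self-contained and produces the isomorphism explicitly as the map induced by the quotient projection, which is the form in which the proposition is reused later (e.g.\ when exact subcomplexes of Verma-module resolutions are quotiented out in Section \ref{secBGG}). Both establish exactly the claimed $\fh$-module isomorphisms, and the two diagram chases are of course the same computation in different clothing.
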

\begin{proof}
The operator $d$ is defined as $d(f+S)=\delta^\ast (f)+S$ for $f\in C_\bullet(\ofn,V)$. The morphism 
\begin{eqnarray*}
\eta:\ker \delta^\ast\to \ker d&& \eta(f)=f+S
\end{eqnarray*}
is well-defined. Since $\eta(\im \delta^\ast)\subset \im d$ this descends to a morphism $\widetilde{\eta}:\ker \delta^\ast/\im \delta^\ast\to \ker d/\im d$. 

We prove that this is injective. Assume that $f\in \ker \delta^\ast \backslash \im\delta^\ast$, we have to prove that $f$ is not of the form $\delta^\ast(g)+s$ for $s\in S_\bullet$. If $f$ were of this form it immediately would follow that $s\in\ker\delta^\ast\cap S_\bullet=\im \delta^\ast\cap S_\bullet$ and therefore $f\in\im\delta^\ast$, which is a contradiction.

Finally we prove that $\widetilde{\eta}$ is also surjective. Every element in $\ker d_k /\im d_{k+1}$ is represented by some $a\in C_\bullet(\ofn,V)$ such that $\delta^\ast a=s\in S_\bullet$. Since $\delta^\ast s=0$ and $S_\bullet$ forms an exact complex, there is a certain $s_1\in S_\bullet$ such that $s=\delta^\ast s_1$. The element $a-s_1$ is clearly inside $\ker\delta^\ast$, so the proposition follows from $\eta(a-s_1)=a+S_\bullet$.
\end{proof}

\begin{theorem}
\label{CtoR}
For any $\mathfrak{osp}(1|2n)$-module $V$, the subspace $A_\bullet\subset C_\bullet(\ofn,V)$ satisfies $A_\bullet\cap \ker\delta^\ast=\{0\}$,
\begin{eqnarray*}
\left(A_\bullet\oplus\delta^\ast A_\bullet \right)\cap R_\bullet(\ofn,V)=\{0\}&\mbox{and}&A_\bullet\oplus\delta^\ast A_\bullet \oplus R_\bullet(\ofn,V)=C_\bullet(\ofn,V).
\end{eqnarray*}
\end{theorem}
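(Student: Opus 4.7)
The strategy is to exhibit a filtration on $C_\bullet(\ofn,V)$ compatible with the $A$--$B$--$R$ decomposition, and to reduce the three assertions to a single leading-term computation for $\delta^\ast$ on $A_\bullet^{(j)}$ that is provided by Appendix 1. Set $A_\bullet:=\bigoplus_{j=1}^n A_\bullet^{(j)}$ and $B_\bullet:=\bigoplus_{j=1}^n B_\bullet^{(j)}$. First I would check $W_\bullet=A_\bullet\oplus B_\bullet$: classifying each monomial in $W_\bullet$ by the smallest index $j$ for which it contains $Y_{2\delta_j}$ or $Y_{\delta_j}^{\wedge 2}$ places it in $B_\bullet^{(j)}$ when the first bad factor is $Y_{2\delta_j}$, and in $A_\bullet^{(j)}$ when it is $Y_{\delta_j}^{\wedge 2}$; these alternatives are exclusive because the definition of $A_\bullet^{(j)}$ explicitly forbids a $Y_{2\delta_j}$ in the $f$-factor. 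Define the filtration $F_j:=R_\bullet\oplus\bigoplus_{i\le j}\bigl(A_\bullet^{(i)}\oplus B_\bullet^{(i)}\bigr)$, so $F_0=R_\bullet$ and $F_n=C_\bullet(\ofn,V)$.

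The key input from Appendix 1 should be the following leading-term statement: for every basis monomial $a=Y_{\delta_j}^{\wedge 2}\wedge f\in A_\bullet^{(j)}$, one has $\delta^\ast a\in F_j$ and
\[\delta^\ast a\;\equiv\;\pm\, Y_{2\delta_j}\wedge f\pmod{F_{j-1}\oplus A_\bullet^{(j)}}.\]
Morally, the one contribution that escapes both $F_{j-1}$ and $A_\bullet^{(j)}$ arises from the bracket $[Y_{\delta_j},Y_{\delta_j}]=Y_{2\delta_j}$; every other cross-term either retains the $Y_{\delta_j}^{\wedge 2}$ prefactor (so stays in $A_\bullet^{(j)}$) or produces bad factors $Y_{2\delta_k}$ or $Y_{\delta_k}^{\wedge 2}$ only with $k\ge j$. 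Because the map $Y_{\delta_j}^{\wedge 2}\wedge f\mapsto Y_{2\delta_j}\wedge f$ is a linear isomorphism $A_\bullet^{(j)}\to B_\bullet^{(j)}$ (its inverse being $Y_{2\delta_j}\wedge g\mapsto Y_{\delta_j}^{\wedge 2}\wedge g$, where $g$ may be assumed without a $Y_{2\delta_j}$ factor since $Y_{2\delta_j}\wedge Y_{2\delta_j}=0$), this yields an induced isomorphism $\varphi^{(j)}:A_\bullet^{(j)}\to B_\bullet^{(j)}$ modulo $F_{j-1}\oplus A_\bullet^{(j)}$.

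Given the leading-term description, the three assertions follow by triangular arguments along the filtration. For $A_\bullet\cap\ker\delta^\ast=\{0\}$, write $a=\sum_j a_j$ with $a_j\in A_\bullet^{(j)}$, let $j_0$ be the largest index with $a_{j_0}\ne 0$, and project the equation $\delta^\ast a=0$ to $B_\bullet^{(j_0)}$ modulo $F_{j_0-1}\oplus A_\bullet^{(j_0)}$; the leading-term identity forces $\varphi^{(j_0)}(a_{j_0})=0$, whence $a_{j_0}=0$, contradicting the choice of $j_0$. For the decomposition $A_\bullet\oplus\delta^\ast A_\bullet\oplus R_\bullet=C_\bullet(\ofn,V)$, I induct on $j$ with base case $F_0=R_\bullet$: given $c\in F_j$ with $B_\bullet^{(j)}$-component $b$, put $a=(\varphi^{(j)})^{-1}(b)\in A_\bullet^{(j)}$; then $c-a-\delta^\ast a\in F_{j-1}$, to which the induction hypothesis applies. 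Finally, $(A_\bullet\oplus\delta^\ast A_\bullet)\cap R_\bullet=\{0\}$ is automatic from the just-established direct sum, once one observes that $A_\bullet\cap R_\bullet=0$ by definition and that $A_\bullet\cap\delta^\ast A_\bullet=0$, since any common element $\delta^\ast a'=a\in A_\bullet$ satisfies $\delta^\ast a=\delta^\ast\delta^\ast a'=0$ and so lies in $A_\bullet\cap\ker\delta^\ast=\{0\}$ by the first assertion.

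The main obstacle is the leading-term property itself. Unpacking $\delta^\ast(Y_{\delta_j}^{\wedge 2}\wedge f)$ via the defining recursion $\delta^\ast(Y\wedge g)=-Y\cdot g-Y\wedge\delta^\ast g$ produces many cross-terms: from brackets $[Y_{\delta_j},Y_\alpha]$ with positive roots $\alpha$ appearing in $f$, from the module action of $Y_{\delta_j}$ on the $V$-factor, and from the iterated application of $\delta^\ast$ to the tail $f$. One must verify by a careful root-combinatorial accounting that none of these produce an uncontrolled $B_\bullet^{(j)}$-contribution modulo $F_{j-1}\oplus A_\bullet^{(j)}$. The decisive structural fact is that the equation $\delta_j+\alpha=2\delta_k$ with $\alpha$ a positive root has, among the roots of $\ofn$, no solution with $k<j$, so brackets of $Y_{\delta_j}$ with other root vectors never lower the smallest bad index. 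Carrying out this accounting rigorously and in all degrees is exactly the content of Appendix 1.
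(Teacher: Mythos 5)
Your reduction of the three assertions to the bijectivity of the map $A_\bullet\to B_\bullet$ induced by $\delta^\ast$, together with $W_\bullet=A_\bullet\oplus B_\bullet$ and $A_k\cong B_{k-1}$, is the same skeleton as the paper's proof, which quotes exactly this bijectivity as Theorem \ref{thmisom} of Appendix 1. The gap is in the key lemma you posit in order to obtain that bijectivity: the claim that $\delta^\ast a\in F_j$ for $a\in A_\bullet^{(j)}$, and a fortiori the congruence modulo $F_{j-1}\oplus A_\bullet^{(j)}$, is false, because the filtration by smallest ``bad index'' is not preserved by $\delta^\ast$. The culprit is the cross-term $k(-1)^k\,Y_{\delta_j}^{\wedge k-1}\wedge Y_{\delta_j}\cdot f$ in the expansion of $\delta^\ast(Y_{\delta_j}^{\wedge k}\wedge f)$ (Lemma \ref{calculation}): for $k=2$ it loses the $Y_{\delta_j}^{\wedge 2}$ prefactor, and if $f$ carries bad factors of index $m>j$ the resulting monomials lie in $A_\bullet^{(m)}\oplus B_\bullet^{(m)}$, outside $F_j$. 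Concretely, for $n=2$ take $a=Y_{\delta_1}^{\wedge 2}\wedge Y_{2\delta_2}\otimes v\in A_\bullet^{(1)}$; then $\delta^\ast a$ contains the term $\pm 2\,Y_{\delta_1}\wedge Y_{2\delta_2}\otimes Y_{\delta_1}v\in B_\bullet^{(2)}$, which is not in $F_1$. The same phenomenon allows components $a_j$ with $j<j_0$ to contribute to the $B_\bullet^{(j_0)}$-block, so both your projection argument for $A_\bullet\cap\ker\delta^\ast=\{0\}$ and your downward induction for the direct-sum decomposition lose their triangularity.

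What repairs this --- and what Appendix 1 actually does --- is to filter not by the index $j$ but by the auxiliary grading $D$, which counts the number of factors with roots in $M=\{\delta_i-\delta_j\}$ minus those in $P=\{\delta_i+\delta_j\}$ plus the $\delta$-weight of the $V$-factor. With respect to $D$ the boundary operator is non-increasing (Lemma \ref{neverrd}), and the problematic cross-terms $Y_{\delta_j}^{\wedge k-1}\wedge Y_{\delta_j}\cdot f$ strictly decrease $D$ (in the example above, the offending term has $D$-degree $D(v)-1$ versus $D(a)=D(v)$), so they vanish after projection onto a fixed $D$-degree; injectivity of $\phi$ is then proved degree by degree, surjectivity follows from $A_k\cong B_{k-1}$ of Lemma \ref{spacesAB}, and finiteness of the grading yields bijectivity of $\phi$. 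Your observation that $\delta_j+\alpha=2\delta_k$ has no positive-root solution with $k<j$ is correct but handles only part of the bookkeeping; without the grading $D$ or an equivalent device, the leading-term structure you rely on is simply not there.
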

\begin{proof}
The proof makes use of the results in Lemma \ref{spacesAB} and Theorem \ref{thmisom} in Appendix 1.

The property $A_\bullet\cap \ker\delta^\ast=\{0\}$ follows immediately from Theorem \ref{thmisom}. This implies that $A_\bullet\oplus\delta^\ast A_\bullet $ is in fact a direct sum, since $\delta^\ast A_\bullet \subset \ker\delta^\ast$. 

If $r=f+g$ with $r\in R_\bullet(\ofn,V)$, $f\in A_\bullet$ and $g\in \delta^\ast A_\bullet$, then $\phi(h)=0$ for any $h\in A_\bullet$ such that $\delta^\ast h=g$, with $\phi$ the isomorphism defined in Theorem \ref{thmisom}, and therefore $g=0$. Since $A_\bullet\subset W_\bullet(\ofn,V)$, $r=f$ implies $r=0=f$ according to Definition \ref{DefAB} and we obtain $\left(A_\bullet\oplus\delta^\ast A_\bullet \right)\cap R_\bullet(\ofn,V)=\{0\}$.

The last property follows from the previous one and dimensional considerations. The first property in the theorem \ref{thmisom} implies that $\dim A_{k+1} =\dim \delta^\ast A_{k+1}=\dim \left(\delta^\ast A\right)_{k}$, together with Lemma \ref{spacesAB} this yields $\dim \left(\delta^\ast A\right)_k=\dim B_{k}$. Therefore $\dim A_k+\dim\left( \delta^{\ast}A\right)_{k}+\dim R_k(\fn,V)=\dim C_k(\ofn,V)$ according to Definition \ref{DefAB} and Lemma \ref{spacesAB}.
\end{proof}

\begin{remark}
Thus far the fact that $V$ is not just an $\fh+\ofn$-module but also an $\mathfrak{osp}(1|2n)$-module has not been used. Theorem \ref{CtoR} could therefore be used to calculate $\ofn$-homology with values in arbitrary finite dimensional $\fh+\ofn$-modules.
\end{remark}

Now we can give the proof of Theorem \ref{Kostantosp12n}.
\begin{proof}

We calculate the Euler characteristic of the homology: $\sum_{i=0}^\infty (-1)^i \ch H_i(\ofn,L(\lambda))$
\begin{eqnarray*}
&=&\sum_{i=0}^\infty (-1)^i \ch(\Lambda^i\ofn)\ch L(\lambda)\\
&=&\frac{\prod_{\alpha\in\Delta^+_{\oa}}(1-e^{-\alpha})}{\prod_{\gamma\in\Delta^+_{\ob}}(1+e^\gamma)}\frac{\prod_{\gamma\in\Delta^+_{\ob}}(e^{\gamma/2}+e^{-\gamma/2})}{\prod_{\alpha\in\Delta^+_{\oa}}(e^{\alpha/2}-e^{-\alpha/2})}\sum_{w\in W}(-1)^{|w|}e^{w(\lambda+\rho)}\\
&=&\sum_{w\in W}(-1)^{|w|}e^{w\cdot\lambda},
\end{eqnarray*}
which is the technique through which Kostant obtained the Weyl character formula from this type of cohomology in \cite{MR0142696}.

Now from Section 4 in \cite{BGG} it follows that $H_k(\ofn,L(\Lambda))\subset\ker\Box$ with $\Box$ the Kostant Laplacian $\Box$ on $C_k(\ofn,V)$. This operator $\Box$ is a quadratic element of $\U(\fh)$. From Proposition \ref{subcomplex} and Theorem \ref{CtoR} it follows that this property can be made stronger to $H_k(\ofn,L(\Lambda))\subset\ker\Box_{R_k}$. The $\fh$-module $R_k$ is isomorphic to the chains for Bott-Kostant cohomology for $\mathfrak{so}(2n+1)$. There the cohomology is well-known and equal to the kernel of the Kostant Laplace operator, which takes the same form as for $\mathfrak{osp}(1|2n)$. Therefore the result in \cite{MR0142696} and the observation of the connection between $\mathfrak{osp}(1|2n)$ and $\mathfrak{so}(2n+1)$ in Section \ref{secPrel} yields
\[H_k(\ofn, L(\lambda))\subset\bigoplus_{w\in W(k)}\C_{w\cdot\lambda}.\]
The Euler characteristic then implies that these inclusions must be equalities.
\end{proof}

The results on cohomology of $\fn$ can be reinterpreted in terms of $\Ext$-functors in the category $\cO$ as defined in Appendix 2.
 \begin{corollary}
For $\fg=\mathfrak{osp}(1|2n)$, $\lambda\in \cP^+$ and $\mu\in\fh^\ast$, the property
\[\Ext^k_{\cO}(M(\mu),L(\lambda))=\begin{cases}1&\mbox{if }\mu=w\cdot\lambda\mbox{ with }|w|=k\\
0&\mbox{otherwise}\end{cases}\]
holds.
\end{corollary}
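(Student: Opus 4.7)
The plan is to reduce the computation to Theorem \ref{Kostantosp12n} via the standard identification between Ext in $\cO$ from a Verma module and $\fn$-cohomology weight spaces, namely
\[
\Ext^k_{\cO}(M(\mu), L(\lambda)) \;\cong\; H^k(\fn, L(\lambda))_\mu,
\]
where the subscript denotes the $\mu$-weight space of the $\fh$-module on the right.

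To set up this identification I would first use the PBW theorem for Lie superalgebras: $\U(\fg)$ is free as a right $\U(\fb)$-module, so the induction functor $\U(\fg)\otimes_{\U(\fb)}-$ is exact and sends a $\U(\fb)$-projective resolution of $\C_\mu$ to a resolution of $M(\mu)$ in $\cO$. The adjunction $\Hom_{\U(\fg)}(\U(\fg)\otimes_{\U(\fb)} -, L(\lambda)) = \Hom_{\U(\fb)}(-, L(\lambda))$, together with the agreement of $\Ext$ in $\cO$ and in $\U(\fg)$-mod recorded in Appendix 2, gives $\Ext^k_{\cO}(M(\mu), L(\lambda)) \cong \Ext^k_{\U(\fb)}(\C_\mu, L(\lambda))$. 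Rewriting this as $H^k(\fb, L(\lambda)\otimes\C_{-\mu})$ and running the Hochschild--Serre spectral sequence for $\fn\subset\fb$, which collapses at $E_2$ because the abelian $\fh$ acts semisimply on everything in sight, yields the $\fh$-invariants $(H^k(\fn, L(\lambda))\otimes\C_{-\mu})^{\fh} = H^k(\fn, L(\lambda))_\mu$.

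Next, I would substitute Theorem \ref{Kostantosp12n}, which gives $H^k(\fn, L(\lambda)) = \bigoplus_{w\in W(k)} \C_{w\cdot\lambda}$, so
\[
\dim \Ext^k_\cO(M(\mu), L(\lambda)) = \#\{\,w \in W(k) \mid w\cdot\lambda = \mu\,\}.
\]
Since $\lambda \in \cP^+$, the weight $\lambda+\rho$ is strictly dominant and hence $W$-regular, so the map $w \mapsto w\cdot\lambda$ is injective on $W$. The above count is therefore $1$ if $\mu = w\cdot\lambda$ for some (necessarily unique) $w \in W(k)$, and $0$ otherwise, which is exactly the claim.

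The only real obstacle is the bookkeeping in the reduction to $\fn$-cohomology: one has to verify that the classical Lie-algebra story carries over to the super setting. This is harmless here because $\fh$ is purely even and acts semisimply on objects of $\cO$, so the Hochschild--Serre collapse is identical to the classical case, and the identification of $\Ext^\bullet_\cO$ with $\Ext^\bullet_{\U(\fg)}$ is available from Appendix 2. After this reduction the corollary is a formal consequence of Theorem \ref{Kostantosp12n} and the regularity of $\lambda+\rho$.
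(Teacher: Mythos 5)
Your overall strategy---reducing $\Ext^k_{\cO}(M(\mu),L(\lambda))$ to the $\mu$-weight space of $H^k(\fn,L(\lambda))$ and then invoking Theorem \ref{Kostantosp12n} together with the regularity of $\lambda+\rho$---is exactly the paper's, and your last two paragraphs are fine (you are in fact more explicit than the paper about why the multiplicity is at most one). But the reduction step contains a genuine error. First, the ``agreement of $\Ext$ in $\cO$ and in $\U(\fg)$-mod'' is not recorded in Appendix 2 and is false for $k\ge 1$: already for $\mathfrak{osp}(1|2)$ with $\lambda=\mu=0$ one has $\Ext^1_{\cO}(M(0),L(0))=0$ (consistent with the corollary, since $H^1(\fn,L(0))=\C_{s\cdot 0}$ has no weight-$0$ part), whereas $\Ext^1_{\U(\fg)}(M(0),L(0))=\Ext^1_{\U(\fb)}(\C,\C)=H^1(\fb,\C)=(\fb/[\fb,\fb])^\ast\cong\fh^\ast\ne 0$. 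Second, and for the same underlying reason, your Hochschild--Serre computation is wrong: semisimplicity of the $\fh$-action does make the spectral sequence degenerate, but degeneration at $E_2$ yields the whole antidiagonal $\bigoplus_{p+q=k}E_2^{p,q}$ with $E_2^{p,q}=H^p\bigl(\fh,H^q(\fn,L(\lambda))\otimes\C_{-\mu}\bigr)\cong\Lambda^p\fh^\ast\otimes\bigl(H^q(\fn,L(\lambda))\bigr)_\mu$, not just the $p=0$ column; for abelian $\fh$ the trivial isotypic component contributes $\Lambda^p\fh^\ast$ in every degree $p\le n$. So $\Ext^k_{\U(\fb)}(\C_\mu,L(\lambda))$ strictly overcounts whenever the answer is nonzero.

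The fix is to never derive the $\fh$-direction, i.e.\ to compute a relative (to $\fh$) Ext rather than an absolute one. This is what the paper does: Frobenius reciprocity is used as an equality of left exact functors \emph{on $\cO$}, $\Hom_\cO(M(\mu),-)=\Hom_{\fb}(\C_\mu,\Res^{\fg}_{\fb}(-))$, and the right-hand side is factored as $\Hom_{\fh}(\C_\mu,-)\circ\Hom_{\fn}(\C,-)$; since $\Hom_{\fh}(\C_\mu,-)$ is exact on $\fh$-semisimple modules, the composite derives to $\Hom_{\fh}\bigl(\C_\mu,H^k(\fn,-)\bigr)$ with no $\Lambda^\bullet\fh^\ast$ factors appearing. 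With that corrected identification your remaining argument goes through verbatim.
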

\begin{proof}
As in the classical case the Frobenius reciprocity $\Hom_\cO(\U(\fg)\otimes_{\U(\fb)}\C_\mu,V)=\Hom_{\fb}(\C_\mu,\Res^{\fg}_{\fb}V)$ holds for all $V\in\cO$. This gives an equality of functors $\cO\to\rm{Sets}$ and since the functor $\Res^{\fg}_{\fb}$ is exact we can take the right derived functor of both left exact functors above to obtain
\[\Ext^k_\cO(M(\mu),V)=\Ext^k_{\fb}(\C_\mu,\Res^{\fg}_{\fb}V)\]
If we use $\Hom_{\fb}(\C_\mu,-)=\Hom_{\fh}(\C_\mu,-)\circ \Hom_{\fn}(\C,-)$, the fact that $\Hom_{\fh}(\C_\mu,-)$ is exact and $\Ext^k(\fn,-)=H^k(\fn,-)$, see Lemma 4.7 in \cite{BGG}, we obtain
\[\Ext^k_\cO(M(\mu),V)=\Hom_{\fh}\left(\C_\mu, H^k(\fn,V)\right).\]
The corollary then follows from Theorem \ref{Kostantosp12n}.
\end{proof}

\section{Bernstein-Gelfand-Gelfand resolutions}
\label{secBGG}

The main result of this section is that all finite dimensional modules of $\mathfrak{osp}(1|2n)$ can be resolved in terms of direct sums of Verma modules. Such resolutions are known as (strong) BGG resolutions and were discovered first for semisimple Lie algebras in \cite{MR0578996}.
\begin{theorem}
\label{resolosp12n}
Every finite dimensional representation $L(\lambda)$ of $\mathfrak{osp}(1|2n)$ has a resolution in terms of Verma modules of the form
\begin{eqnarray*}
&&0\to \bigoplus _{w\in W(n^2)} M(w\cdot\lambda)\to\cdots\to \bigoplus _{w\in W(j)} M(w\cdot\lambda)\to\cdots\\
&&\to \bigoplus _{w\in W(1)} M(w\cdot\lambda)\to M(\lambda)\to L(\lambda)\to 0.
\end{eqnarray*}
\end{theorem}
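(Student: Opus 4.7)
The plan is to construct a complex $D_\bullet$ with $D_k=\bigoplus_{w\in W(k)}M(w\cdot\lambda)$, augmented by $M(\lambda)\tto L(\lambda)$, and to verify that it is acyclic. The shape of the terms is forced by the corollary after Theorem \ref{Kostantosp12n}: in any minimal Verma resolution of $L(\lambda)$ the multiplicity of $M(\mu)$ in the $k$-th term equals $\dim\Ext^k_\cO(M(\mu),L(\lambda))$, which is one when $\mu=w\cdot\lambda$ with $|w|=k$ and vanishes otherwise.

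For the construction of the differentials I would invoke the BGG theorem for $\mathfrak{osp}(1|2n)$ from \cite{MR1479886}, which supplies $\dim\Hom_\cO(M(w\cdot\lambda),M(w'\cdot\lambda))=1$ whenever $w\geq w'$ in the Bruhat order, together with the fact that the composition of two standard embeddings is again a standard embedding. Fixing nonzero embeddings $\varphi_{w,w'}$ for each covering pair $|w|=|w'|+1$, the differentials $d_k:D_k\to D_{k-1}$ are assembled out of the $\varphi_{w,w'}$ with signs. Verifying $d_{k-1}\circ d_k=0$ reduces to a combinatorial sign assignment on the edges of the Bruhat graph so that every Bruhat square contributes zero; since the Weyl group and its Bruhat order for $\mathfrak{osp}(1|2n)$ coincide with those of $\mathfrak{so}(2n+1)$, the classical sign function of \cite{MR0578996} transfers verbatim.

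For exactness, the Euler-characteristic identity $\sum_k(-1)^k\ch D_k=\ch L(\lambda)$ is immediate from the Weyl-Kac character formula (valid because $\lambda$ is typical). Exactness at $D_0$ follows because the maximal submodule $N(\lambda)\subset M(\lambda)$ is, by the BGG embedding theorem, generated by the images of the standard embeddings from $M(s\cdot\lambda)$ for simple reflections $s$, which is precisely the image of $d_1$. For higher degrees I would argue as follows: every $D_k$ has central character $\chi_\lambda$, so the simple constituents of each homology module are of the form $L(w\cdot\lambda)$; applying $\Hom_\cO(-,L(w\cdot\lambda))$ termwise produces a cocomplex whose cohomology is bounded by the Ext-dimensions from the corollary after Theorem \ref{Kostantosp12n}, and comparison with the Euler-characteristic identity forces all positive-degree homology of $D_\bullet$ to vanish. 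Equivalently one proceeds by induction along the Bruhat order, using the BGG embedding theorem to split off the successive kernels.

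The essential obstacle, flagged in the introduction, is that the classical BGG construction pivots on the identification $\ker\Box=H^\bullet(\fn,L(\lambda))$, which fails here: there are $\Box$-harmonic cochains that are not representatives of cohomology classes. The remedy is that the multiplicities controlling the size of each $D_k$ are those of $H^\bullet(\fn,L(\lambda))$, not of the larger space $\ker\Box$. Theorem \ref{Kostantosp12n}, obtained through Proposition \ref{subcomplex} and Theorem \ref{CtoR} by quotienting out the exact subcomplex $A_\bullet\oplus\delta^\ast A_\bullet$, supplies precisely the one-dimensional multiplicities that the resolution requires. Once this input replaces the classical $\ker\Box$ identification, the scheme of \cite{MR0578996} produces the strong BGG resolution claimed, with no extra Verma summands appearing in the complex.
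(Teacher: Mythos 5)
Your overall strategy differs from the paper's: you propose to build the complex directly from Verma-module embeddings with a BGG-style sign assignment and then prove exactness afterwards, whereas the paper starts from the manifestly exact Koszul-type resolution $\U(\fg)\otimes_{\U(\fb)}(\Lambda^\bullet\fg/\fb\otimes L(\lambda))$ restricted to the block $\cO_{\chi_\lambda}$ (Lemma \ref{preBGG}) and then prunes away the superfluous Verma subquotients by repeatedly quotienting out exact subcomplexes, using Theorem \ref{Kostantosp12n}, Proposition \ref{subcomplex}, Corollary \ref{extVerma} and Lemma \ref{extzero}. The direct route is legitimate in principle, but as written it has a genuine gap precisely where the work lies: exactness. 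Your argument ``apply $\Hom_\cO(-,L(w\cdot\lambda))$ termwise and compare with the Ext-dimensions from the corollary'' does not go through, because the cohomology of $\Hom_\cO(D_\bullet,L(w\cdot\lambda))$ computes $\Ext^\bullet_\cO(L(\lambda),L(w\cdot\lambda))$ only if $D_\bullet$ is already known to be a resolution (and the corollary in any case computes $\Ext^k_\cO(M(\mu),L(\lambda))$, not Ext into an arbitrary $L(w\cdot\lambda)$); the Euler-characteristic identity only shows that the alternating sum of the characters of the homology vanishes, which permits cancelling contributions in different degrees and cannot by itself force each homology group to vanish. The closing appeal to ``induction along the Bruhat order'' is too vague to fill this hole.

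Two further inputs are asserted rather than established. First, Theorem \ref{thmMusson} as used in the paper is a statement about composition factors $[M(\lambda):L(\mu)]$; the facts you need --- existence of an embedding $M(w\cdot\lambda)\hookrightarrow M(w'\cdot\lambda)$ for every Bruhat-comparable pair, $\dim\Hom_\cO(M(w\cdot\lambda),M(w'\cdot\lambda))\le 1$, injectivity and nonvanishing of composites around Bruhat squares (needed for the sign assignment to yield $d^2=0$) --- form the separate Verma-embedding package, which for $\mathfrak{osp}(1|2n)$ requires an argument (e.g.\ that $\U(\ofn)$ is a domain) and is not supplied by the citation. Second, the opening claim that the multiplicity of $M(\mu)$ in the $k$-th term of a ``minimal Verma resolution'' equals $\dim\Ext^k_\cO(M(\mu),L(\lambda))$ presupposes existence and a minimality theory for such resolutions; the paper instead gets the multiplicities from the fact that Verma modules are $\U(\ofn)$-free, so any resolution by standardly filtered modules computes $H_\bullet(\ofn,L(\lambda))$, and then uses Theorem \ref{Kostantosp12n} to identify which Verma summands must survive. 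Your final paragraph correctly identifies that the $\ofn$-cohomology (rather than $\ker\Box$) controls the size of the terms, which is indeed the paper's key point, but the mechanism by which this control is exerted on an actual complex is exactly what is missing from your write-up.
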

In the remainder of this section we provide the results needed to prove Theorem \ref{resolosp12n}. We will make extensive use of the notions and results on the category $\cO$ in Appendix 2.

First, we state the BGG theorem for $\mathfrak{osp}(1|2n)$, which was proved by Musson in Theorem 2.7 in \cite{MR1479886}:
\begin{theorem}[BGG theorem]
\label{thmMusson}
For $\fg=\mathfrak{osp}(1|2n)$ and $\lambda,\mu\in\fh^\ast$ it holds that $[M(\lambda):L(\mu)]\not=0$ if and only if $\mu \uparrow \lambda$ ($\mu$ is strongly linked to $\lambda$).
\end{theorem}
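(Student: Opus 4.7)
My plan is to prove both directions of the BGG theorem for $\mathfrak{osp}(1|2n)$ by imitating the Jantzen strategy, exploiting two features that make $\mathfrak{osp}(1|2n)$ behave essentially like a semisimple Lie algebra. First, because the Harish-Chandra map yields $Z(\fg)\cong S(\fh)^W$, the central characters $\chi_\lambda$ and $\chi_\mu$ coincide precisely when $\mu\in W\cdot\lambda$; hence $[M(\lambda):L(\mu)]\neq 0$ already forces $\mu$ to lie in the dot-orbit of $\lambda$, and the issue is only to cut this orbit down to the strongly linked elements. Second, the identification of Weyl groups with that of $\mathfrak{so}(2n+1)$ (as noted in Section \ref{secPrel}) means the strong linkage relation is the classical Bruhat one, so the target statement is formally identical to the Lie algebra case.

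For the "if" direction, I would argue that it suffices to exhibit a nonzero homomorphism $M(s_\alpha\cdot\lambda)\hookrightarrow M(\lambda)$ whenever $\alpha$ is a positive root and $\langle\lambda+\rho,\alpha^\vee\rangle\in\Z_{>0}$; composing such embeddings along a strongly linked chain $\mu=\mu_0\uparrow\mu_1\uparrow\cdots\uparrow\mu_r=\lambda$ yields $M(\mu)\hookrightarrow M(\lambda)$, from which $[M(\lambda):L(\mu)]\geq [M(\mu):L(\mu)]=1$. For an even positive root $\alpha$, the existence of the corresponding singular vector is the classical Shapovalov/Verma construction, which goes through unchanged inside $\U(\ofn_{\oa})$. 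For the odd simple root $\delta_n$ and, more generally, the odd positive roots $\delta_i$, I would produce the singular vector by a direct calculation inside the $\mathfrak{osp}(1|2)$-triple generated by $Y_{\delta_i},Y_{2\delta_i}$ and their positive-root counterparts, using that $[Y_{\delta_i},Y_{\delta_i}]=Y_{2\delta_i}$; this reduces the odd case to the rank-one statement for $\mathfrak{osp}(1|2)$, where the singular vector is an explicit polynomial in the two negative root vectors.

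For the "only if" direction I would run the Jantzen machinery. Deform to $\mathbb{C}[[T]]$ by considering $M(\lambda+T\eta)_T$ for a generic $\eta\in\fh^\ast$, equip it with the canonical contravariant form induced by the Chevalley-type antiautomorphism of $\Uq$, and filter $M(\lambda)=M(\lambda)^0\supset M(\lambda)^1\supset\cdots$ by the vanishing order in $T$ of the form. The standard arguments (as in Chapter~5 of \cite{MR2428237}) give $M(\lambda)^1=\mathrm{rad}\,M(\lambda)$ and produce a sum formula of the shape
\[\sum_{i\geq 1}\ch M(\lambda)^i\;=\;\sum_{\alpha\in\Delta^+}\nu_\alpha(\lambda)\,\ch M(s_\alpha\cdot\lambda),\]
with $\nu_\alpha(\lambda)$ nonnegative and vanishing unless $s_\alpha\cdot\lambda<\lambda$. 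An induction on the partial order then forces every composition factor $L(\mu)$ of $M(\lambda)$ with $\mu\neq\lambda$ to appear already in some $M(s_\alpha\cdot\lambda)$ with $s_\alpha\cdot\lambda\uparrow\lambda$, whence $\mu\uparrow\lambda$ by induction.

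The main obstacle is getting the Jantzen sum formula correct in the super setting: one has to identify the determinant of the Shapovalov form on each weight space, and the odd positive roots $\delta_i$ contribute extra factors of the form $\langle\lambda+\rho,\delta_i^\vee\rangle+\tfrac{1}{2}$-type, intertwined with the $2\delta_i$ factors. I would handle this by separating the two contributions along the $\mathfrak{osp}(1|2)_i$-subalgebras and observing that, because $2\delta_i=(\delta_i)+(\delta_i)$ is non-isotropic, the combined contribution at the reflection hyperplane of $\delta_i$ matches exactly the $\mathfrak{so}(2n+1)$ contribution at $\delta_i$ (modulo an irrelevant overall scalar). This is precisely the place where the anisotropy of the odd roots — which earlier in the paper made the Laplacian kernel too big — is used positively: it ensures the Jantzen filtration produces no extra composition factors beyond those governed by the Weyl group.
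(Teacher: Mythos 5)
First, a point of comparison: the paper does not prove Theorem \ref{thmMusson} at all --- it is quoted from Theorem 2.7 of Musson \cite{MR1479886} --- so there is no internal proof to measure you against. Your outline (singular vectors for the ``if'' direction; Jantzen filtration and sum formula for the ``only if'' direction) is in substance the strategy of that reference, and your two framing observations --- that $Z(\fg)\cong S(\fh)^W$ confines composition factors to the dot-orbit, and that the linkage combinatorics is that of $\mathfrak{so}(2n+1)$ --- are correct and are exactly what makes the statement classical in form.

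There is, however, a concrete gap in your ``if'' direction. You assert that for an even positive root $\alpha$ the singular vector comes from the classical Shapovalov/Verma construction ``unchanged inside $\U(\ofn_{\oa})$''. This fails as stated, for two reasons. First, the relevant integrality condition is $\langle\lambda+\rho,\alpha^\vee\rangle\in\Z_{>0}$ with the super $\rho$, not $\rho_{\oa}$; for the roots $\delta_i+\delta_j$ these shifts differ by $1$ (and for $2\delta_i$ versus $\delta_i$ the two integrality conditions are genuinely different), so the $\mathfrak{sp}(2n)$-singular vector sits at the wrong weight. Second, a vector annihilated by $\fn_{\oa}$ need not be annihilated by the odd part of $\fn$, so $\fg_{\oa}$-singularity does not yield $\fg$-singularity. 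The standard repair is to construct embeddings only for \emph{simple} reflections (an $\mathfrak{sl}(2)$ computation for $\delta_i-\delta_{i+1}$, checking additionally that the odd simple raising operator kills $f_\alpha^{m}v_\lambda$, plus your $\mathfrak{osp}(1|2)$ computation for $\delta_n$), and then to obtain the general positive root either by the Bernstein--Gelfand--Gelfand induction on the Bruhat order or, more economically, as a consequence of the sum formula itself by induction along a strongly linked chain, as in Chapter 5 of \cite{MR2428237}. The remaining soft spot is the one you flag yourself: the super Shapovalov determinant, in particular that the $\delta_i$ and $2\delta_i$ factors combine into a contribution supported exactly where $s_{\delta_i}\cdot\lambda\uparrow\lambda$. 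That computation is the real content of Musson's proof and cannot be waved through, but your plan for it points in the right direction.
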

Using this we obtain the following corollary.
\begin{corollary}
\label{extVerma}
Consider $\fg=\mathfrak{osp}(1|2n)$ and $\mu,\lambda\in\fh^\ast$. If $\Ext_{\cO}\left(M(\mu),M(\lambda)\right)\not=0$ then $\mu\uparrow\lambda$ but $\mu\not=\lambda$.
\end{corollary}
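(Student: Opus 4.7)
The plan is to realise a non-zero class in $\Ext_{\cO}(M(\mu),M(\lambda))$ as a genuine non-split short exact sequence
\[
0 \to M(\lambda) \xrightarrow{f} E \xrightarrow{g} M(\mu) \to 0
\]
in $\cO$, and to peel off three facts from it in succession: first $\chi_\mu=\chi_\lambda$, then $\mu<\lambda$, and finally the strong linkage $\mu\uparrow\lambda$. The last step, to be bridged by Theorem \ref{thmMusson}, will be the main obstacle.

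First I would show $\chi_\mu=\chi_\lambda$ by the standard block argument: if some $z\in Z(\fg)$ had $\chi_\mu(z)\neq\chi_\lambda(z)$, then the $\fg$-linear operator $z-\chi_\lambda(z)\cdot\id$ on $E$ annihilates the submodule $M(\lambda)$ and acts on the quotient $M(\mu)$ as the non-zero scalar $c=\chi_\mu(z)-\chi_\lambda(z)$, so the formula $s(g(e))=c^{-1}(z-\chi_\lambda(z))(e)$ defines a $\fg$-linear splitting of $g$, contradicting non-splitness. Combined with the Harish-Chandra isomorphism $Z(\fg)\cong S(\fh)^W$ for $\mathfrak{osp}(1|2n)$ recalled in the introduction, this forces $\mu\in W\cdot\lambda$. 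Next I would extract strictness by a highest-weight argument: fix a lift $\tilde v\in E$ of the generator $v_\mu$ of $M(\mu)$, noting that any other lift differs from $\tilde v$ by an element of $M(\lambda)_\mu$. If some translate $\tilde v+m$ with $m\in M(\lambda)_\mu$ were annihilated by $\fn$, it would be a highest-weight vector of weight $\mu$ in $E$ generating a Verma submodule that splits $g$. Hence for every lift $\fn\cdot(\tilde v+m)\neq 0$, and in particular $x_\alpha\tilde v\neq 0$ for some positive root vector $x_\alpha$; this non-zero vector lies in $M(\lambda)_{\mu+\alpha}$, so $\mu+\alpha\leq\lambda$ and in particular $\mu<\lambda$, which also yields $\mu\neq\lambda$.

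The hard part will be upgrading the two facts $\mu\in W\cdot\lambda$ and $\mu<\lambda$ to the strong linkage $\mu\uparrow\lambda$, since same central character together with strict inequality in the standard partial order is genuinely weaker than strong linkage on a general $W$-orbit. The natural closing move is to invoke Theorem \ref{thmMusson} and reduce the question to showing $[M(\lambda):L(\mu)]\neq 0$. One path is to pass to the $\fg$-submodule $N=\U(\fg)\tilde v\subset E$: since non-splitness precludes $N\cap M(\lambda)=0$ (otherwise $g|_N\colon N\to M(\mu)$ would be an isomorphism and its inverse would split the sequence), the intersection $N\cap M(\lambda)$ is a non-zero submodule of $M(\lambda)$, whose primitive vectors produce highest-weight vectors of $M(\lambda)$ and hence composition factors. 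Combining the resulting list of composition factors with the central-character and weight-order constraints from the earlier steps should isolate $L(\mu)$ among them, at which point Theorem \ref{thmMusson} delivers $\mu\uparrow\lambda$ and closes the proof.
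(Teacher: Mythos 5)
Your first three steps are sound: the central-character argument correctly gives $\mu\in W\cdot\lambda$, and the lifted-highest-weight-vector argument correctly gives $\mu<\lambda$, hence $\mu\neq\lambda$ (this is essentially how the paper disposes of $\mu=\lambda$ as well). The genuine gap is exactly where you flag it, and the bridge you propose does not close it. From $N=\U(\fg)\tilde v$ you do get $N\cap M(\lambda)\neq 0$, but the primitive vectors of this submodule only produce weights $\nu$ with $[M(\lambda):L(\nu)]\neq 0$; choosing $\tilde v$ to be a weight vector, $N\cap M(\lambda)=\U(\fg)\,\fn\tilde v$ is generated in weights $\mu+\alpha$ with $\alpha\in\Delta^+$, and nothing forces $L(\mu)$ into its composition series. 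The constraints you would feed in at that point are precisely "$\nu\in W\cdot\mu$ and $\nu>\mu$", and deducing $\mu\uparrow\nu$ from these is the very implication (same orbit plus strictly smaller does not give strong linkage) that you correctly identified as false in general. Note also that your target $[M(\lambda):L(\mu)]\neq 0$ is, by Theorem \ref{thmMusson}, literally equivalent to the desired conclusion $\mu\uparrow\lambda$, so any argument reaching it must already contain the full content of the corollary; it cannot fall out of soft bookkeeping on $N\cap M(\lambda)$.

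The paper's route (following Humphreys, Theorem 6.5) avoids this by working with the projective cover instead of the submodule generated by a lift: lift the canonical map $P(\mu)\tto M(\mu)$ through $g$ to $\phi:P(\mu)\to E$, and use that $P(\mu)$ has a standard filtration $0=P_0\subset P_1\subset\cdots\subset P_n=P(\mu)$ with $P_i/P_{i-1}\cong M(\mu_i)$ where $\mu\uparrow\mu_i$, by BGG reciprocity combined with Theorem \ref{thmMusson}. Non-splitness forces $\phi(P_i)\cap M(\lambda)\neq 0$ while $\phi(P_{i-1})\cap M(\lambda)=0$ for some $i$, so $M(\lambda)$ contains a nonzero homomorphic image of $M(\mu_i)$, giving $[M(\lambda):L(\mu_i)]\neq 0$ and hence $\mu_i\uparrow\lambda$; transitivity of $\uparrow$ then yields $\mu\uparrow\lambda$. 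The idea you are missing is that one should not aim at $[M(\lambda):L(\mu)]\neq 0$ directly, but at $[M(\lambda):L(\mu_i)]\neq 0$ for an intermediate weight already known (via the projective cover) to dominate $\mu$ in the strong linkage order.
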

\begin{proof}
The property $\Ext_{\cO}\left(M(\mu),M(\lambda)\right)\not=0$ holds if and only if there is a short exact non-split sequence of the form $M(\lambda)\hookrightarrow M\tto M(\mu)$ for an $M\in\cO$. That $\mu\not=\lambda$ must hold follows immediately from the fact that otherwise $M$ would contain two highest weight vectors of weight $\lambda$, which both generate a Verma module.

The remainder of the proof is then equivalent with the proof of Theorem 6.5 in \cite{MR2428237}. We consider the projective cover $P(\mu)$ of $M(\mu)$, which exists and has a standard filtration by Lemma \ref{projO}. This filtration $0= P_0\cdots\subset P_1\subset\cdots P_n=P$ satisfies $P_i/P_{i-1}\cong M(\mu_i)$ with $\mu\uparrow\mu_i$ by the combination of Theorem \ref{thmMusson} and Lemma \ref{BGGreproc}.

The canonical map $P(\mu)\to M(\mu)$ extends to $\phi:P(\mu)\to M$ and since the exact sequence does not split we obtain that for some $i$, $\phi(P_i)\cap M(\lambda)\not=0$ while $\phi(P_{i-1})\cap M(\lambda)=0$. This implies that $M(\lambda)$ has a nonzero submodule which is a homomorphic image of $M(\mu_i)$ and therefore $[M(\lambda):L(\mu_i)]\not=0$. Applying Theorem \ref{thmMusson} again yields $\mu_i\uparrow \lambda$.

These two results lead to $\mu\uparrow\lambda$.
\end{proof}

Now we can prove the following consequence of this corollary.
\begin{lemma}
\label{extzero}
Consider $w\in W$, $\lambda\in\cP^+$ and a module $M$ with a standard filtration where the occurring Verma modules are of the form $M(w'\cdot\lambda)$ with $l(w')\ge l(w)$, then
\[\Ext_\cO(M(w\cdot\lambda),M)=0.\]
Furthermore, any module $S$ in $\cO_{\chi_\lambda}$ with standard filtration has a filtration of the form $S=S^{(0)}\supseteq S^{(1)}\supseteq\cdots S^{(n^2)}\supseteq S^{(n^2+1)}= 0$, where $S^{(j)}/S^{(j+1)}$ is isomorphic to the direct sum of Verma modules with highest weights $u\cdot\lambda$ with $u\in W(n^2-j)$.
\end{lemma}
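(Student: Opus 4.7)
The plan is to prove the two assertions in order: the first statement by induction on the length of the standard filtration of $M$, reducing (via Corollary \ref{extVerma}) to a Bruhat-order comparison on the dot orbit; the second statement by starting from a standard filtration of $S$, performing a bubble sort using the first assertion, and then splitting off same-length sub-quotients.

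For the first assertion, I would induct on the filtration length $N$ of $M$, say $0=M_0\subset M_1\subset\cdots\subset M_N=M$ with $M_i/M_{i-1}\cong M(w_i\cdot\lambda)$ and each $l(w_i)\ge l(w)$. The base case $N=1$ amounts to showing $\Ext_\cO(M(w\cdot\lambda),M(w'\cdot\lambda))=0$ whenever $l(w')\ge l(w)$. By Corollary \ref{extVerma} non-vanishing forces $w\cdot\lambda\uparrow w'\cdot\lambda$ with $w\cdot\lambda\ne w'\cdot\lambda$. Since $\lambda\in\cP^+$ the weight $\lambda+\rho$ is regular, so the dot orbit has $|W|$ distinct elements, and (via the identification of $W$ and of the strongly-linked order with the $\mathfrak{so}(2n+1)$ situation, Section~\ref{secPrel}) the relation $w\cdot\lambda\uparrow w'\cdot\lambda$ is equivalent to $w'\le w$ in Bruhat order, giving $l(w')\le l(w)$. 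Combined with the hypothesis this yields $l(w)=l(w')$, and two Bruhat-comparable elements of equal length must coincide, so $w=w'$, contradicting $w\cdot\lambda\ne w'\cdot\lambda$. For the inductive step I would apply $\Hom_\cO(M(w\cdot\lambda),-)$ to $0\to M_{N-1}\to M\to M(w_N\cdot\lambda)\to 0$ and read off $\Ext_\cO(M(w\cdot\lambda),M)=0$ from the long exact sequence, using the inductive hypothesis on $M_{N-1}$ and the base case on $M(w_N\cdot\lambda)$.

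For the second assertion, I would start from any standard filtration $0=F_0\subset F_1\subset\cdots\subset F_N=S$ with $F_i/F_{i-1}\cong M(w_i\cdot\lambda)$. Whenever $l(w_i)>l(w_{i+1})$, the base case of Part~1 gives $\Ext_\cO(M(w_{i+1}\cdot\lambda),M(w_i\cdot\lambda))=0$, so the extension $0\to M(w_i\cdot\lambda)\to F_{i+1}/F_{i-1}\to M(w_{i+1}\cdot\lambda)\to 0$ splits, allowing one to replace $F_i$ by a submodule $F_i'$ so that $F_i'/F_{i-1}\cong M(w_{i+1}\cdot\lambda)$ and $F_{i+1}/F_i'\cong M(w_i\cdot\lambda)$. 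Iterating this swap (bubble sort) terminates and produces a standard filtration with $l(w_i)$ non-decreasing in $i$. I would then define $S^{(j)}:=F_{k_j}$ with $k_j$ the largest index such that $l(w_{k_j})\le n^2-j$ (and $0$ if none exists); by construction $S^{(j)}/S^{(j+1)}$ is an iterated extension of Vermas $M(w\cdot\lambda)$ with $l(w)=n^2-j$ exactly. Finally, for any two elements $w,w'$ of the same length in $W$, the Bruhat argument above forbids $w\cdot\lambda\uparrow w'\cdot\lambda$ in either direction unless $w=w'$, so Corollary \ref{extVerma} kills $\Ext_\cO(M(w\cdot\lambda),M(w'\cdot\lambda))$ for all such pairs; iterating the splitting along the filtration of $S^{(j)}/S^{(j+1)}$ then identifies it with the direct sum of the constituent Vermas, as required.

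The main obstacle is the base case of Part~1, because everything else is a formal consequence of it; its content is really the translation of the strong-linkage relation into Bruhat comparison, which relies on $\lambda+\rho$ being regular for $\lambda\in\cP^+$ and on the $\mathfrak{osp}(1|2n)\leftrightarrow\mathfrak{so}(2n+1)$ identification of Section~\ref{secPrel}. Once this translation is in hand, the inductive step, the bubble-sort rearrangement, and the final splitting are routine; one only has to check that each transposition in the bubble sort is a purely local modification of a single $F_i$, so no global consistency issue arises.
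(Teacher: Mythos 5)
Your proposal is correct and follows essentially the same route as the paper: the first assertion by induction on the filtration length, with the base case extracted from Corollary \ref{extVerma} via the Bruhat-order reading of strong linkage on the regular orbit $W\cdot\lambda$, and the second assertion by rearranging a standard filtration using that $\Ext$-vanishing. The only organizational difference is that the paper extracts the minimal-length Verma subquotients as direct summands at the bottom and quotients them out iteratively, whereas you first bubble-sort the filtration and then split each constant-length layer; the two procedures are interchangeable and rest on the same vanishing statements.
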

\begin{proof}
The first statement is an immediate application of Corollary \ref{extVerma} if $M$ is a Verma module. The remainder can then be proved by induction on the filtration length. Assume it is true for filtration length $p-1$ and $M$ has filtration length $p$. Then there is a short exact sequence
\[0\to N\to M\to M(w_p\cdot \lambda)\to0\]
for $N$ having a standard filtration of the prescribed kind of length $p-1$ and $l(w_p)\ge l(w)$. Applying the functor $\Hom_\cO(M(w\cdot\lambda),-)$ and its right derived functors gives a long exact sequence
\begin{eqnarray*}
&&0\to\Hom_\cO(M(w\cdot\lambda),N)\to \Hom_\cO(M(w\cdot\lambda),M)\to \Hom_\cO(M(w\cdot\lambda),M(w_p\cdot \lambda))\\
&&\to\Ext_\cO(M(w\cdot\lambda),N)\to \Ext_\cO(M(w\cdot\lambda),M)\to \Ext_\cO(M(w\cdot\lambda),M(w_p\cdot \lambda))\to\cdots.
\end{eqnarray*}
Since $\Ext_\cO(M(w\cdot\lambda),N)=0= \Ext_\cO(M(w\cdot\lambda),M(w_p\cdot \lambda))$ by the induction step we obtain $\Ext_\cO(M(w\cdot\lambda),M)=0$.

In order to prove the second claim we consider an arbitrary module $K$ in $\cO_{\chi_\lambda}$ with a standard filtration, 
\[K=K_0\supset K_1\supset \cdots \supset K_d=0\qquad \mbox{with } \quad K_i/K_{i+1}\cong M(w_{(i)}\cdot\lambda).\]
Consider an arbitrary $i$ such that $w_{(i)}$ has the minimal length appearing in the set $\{w_{(j)},j=0,\cdots,d-1\}$, since $\Ext_\cO(M(w_{(i)}\cdot\lambda), K_{i+1})=0$ by the first part of the lemma it follows that $M(w_{(i)})\subset K_i\subset K$. Therefore the direct sum of all these Verma modules are isomorphic to a submodule of $K$. This submodule can be quotiented out and the statement follows by iteration.
\end{proof}

As in \cite{MR0578996} we start by constructing a resolution of $L(\lambda)$ in terms of modules induced by the spaces of chains 
\[C_\bullet(\ofn,L(\lambda))\cong \Lambda^\bullet\ofn\otimes L(\lambda)\cong \Lambda^\bullet\left(\fg/\fb\right)\otimes L(\lambda),\] which will possess standard filtrations by construction. For the classical case, restricting to the block in $\cO$ which $L(\lambda)$ belongs to, exactly reduces from $C_k(\ofn,V)$ to $H_k(\ofn,V)$. Corollary \ref{extVerma} then already yields the BGG resolutions. In fact, according to the results in \cite{MR0142696} only one Casimir operator is needed for this reduction, the quadratic one. Applying this procedure in the case of Lie superalgebras would however lead to a resolution in terms of the kernel of the Laplace operator, which is still larger than the homology groups, as discussed in Section \ref{Kostant}. In case the kernel of the Laplace operator agrees with the cohomology, strong BGG resolutions for basic classical Lie superalgebras always exist, according to the result in \cite{BGG}.

\begin{lemma}
\label{preBGG}
For each finite dimensional representation $L(\lambda)$ of $\mathfrak{osp}(1|2n)$, there is a finite resolution of the form
\begin{eqnarray*}
\cdots\to D_k\to\cdots\to D_1\to D_0\to L(\lambda)\to 0,
\end{eqnarray*}
where each $D_k$ has a standard filtration. Moreover $D_k$ has a filtration $D_k=S_k^{(0)}\supseteq S_k^{(1)}\supseteq\cdots S_k^{(n^2)}\supseteq S_k^{(n^2+1)}= 0$, where $S_k^{(j)}/S_k^{(j+1)}$ is isomorphic to the direct sum of Verma modules with highest weights $w\cdot\lambda$ with $l(w)=n^2-j$.
\end{lemma}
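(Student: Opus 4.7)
The plan is to mimic the Bernstein-Gelfand-Gelfand construction of \cite{MR0578996}, adapted to the super setting as in \cite{BGG}. First I would introduce the auxiliary complex
\[
\widetilde D_k \;=\; \U(\fg)\otimes_{\U(\fb)}\bigl(L(\lambda)\otimes\Lambda^k(\fg/\fb)\bigr),
\]
where $\fb$ acts on the tensor factor $L(\lambda)\otimes\Lambda^k(\fg/\fb)$ by the tensor product of the restricted and the adjoint action, and $\Lambda^{\bullet}$ denotes the super-exterior power (so odd directions contribute a symmetric factor, in agreement with the definition of $C_\bullet(\ofn,V)$ in Section \ref{secPrel}). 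Once the super signs in the Chevalley-Eilenberg/Koszul differential are arranged correctly, that differential is $\fb$-equivariant and turns $\widetilde D_\bullet\to L(\lambda)\to 0$ into an exact complex in $\cO$. This is the direct super analogue of the complex used in \cite{MR0578996} and is also the starting point of \cite{BGG}.

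Next I would pass to the block $\cO_{\chi_\lambda}$ by setting
\[
D_k \;=\; \mathrm{pr}_{\chi_\lambda}\widetilde D_k.
\]
Exactness of the block projection (see Appendix 2) preserves the resolution, since $L(\lambda)\in\cO_{\chi_\lambda}$. To see that $D_k$ carries a standard filtration I would choose a composition series of the finite-dimensional $\fb$-module $L(\lambda)\otimes\Lambda^k(\fg/\fb)$ by one-dimensional weight subquotients, and then induce: this produces a standard filtration of $\widetilde D_k$ by Verma modules $M(\mu)$, and after applying $\mathrm{pr}_{\chi_\lambda}$ precisely the steps with $\chi_\mu=\chi_\lambda$ survive. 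Because the Harish-Chandra map is an isomorphism $Z(\fg)\cong S(\fh)^W$ for $\mathfrak{osp}(1|2n)$, the condition $\chi_\mu=\chi_\lambda$ forces $\mu\in W\cdot\lambda$, so every Verma module appearing in the standard filtration of $D_k$ is of the form $M(w\cdot\lambda)$ for some $w\in W$.

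Finally, I would invoke the second statement of Lemma \ref{extzero}: any module in $\cO_{\chi_\lambda}$ carrying a standard filtration admits a finer filtration whose $j$-th successive quotient is a direct sum of Verma modules $M(w\cdot\lambda)$ with $l(w)=n^2-j$, where $n^2=|\Delta^+_{\oa}|$ is the length of the longest element of $W$. Specialising this statement to each $D_k$ yields precisely the refined filtration described in the lemma.

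The main obstacle is verifying that $\widetilde D_\bullet$ really is a resolution of $L(\lambda)$: this requires careful bookkeeping of super signs in the Koszul differential, so that odd root vectors contribute symmetric (rather than antisymmetric) components to $\Lambda^\bullet(\fg/\fb)$. This super Koszul resolution is, however, a standard tool in Lie superalgebra (co)homology, so beyond this bookkeeping no genuinely new ideas are needed beyond those already present in the Lie algebra case.
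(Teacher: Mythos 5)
Your construction is essentially identical to the paper's: the super analogue of the BGG--Koszul complex $\U(\fg)\otimes_{\U(\fb)}(\Lambda^\bullet(\fg/\fb)\otimes L(\lambda))$, followed by projection onto the block $\cO_{\chi_\lambda}$ and an appeal to the second part of Lemma \ref{extzero} to reorganise the standard filtration by length. Your intermediate step (filtering the finite dimensional $\fb$-module $L(\lambda)\otimes\Lambda^k(\fg/\fb)$ by weight lines, inducing, and using the Harish--Chandra isomorphism to see that only weights in $W\cdot\lambda$ survive the block projection) is exactly what the paper leaves implicit, so no objection there.

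The one point you do not address is the \emph{finiteness} of the resolution, which is part of the statement. Unlike the purely even case, $\Lambda^k(\fg/\fb)$ is nonzero for every $k$ (the odd directions contribute symmetric powers), so the complex $\widetilde D_\bullet$ is genuinely infinite and finiteness is not automatic. It only holds after the block projection: for $k$ sufficiently large, every weight of $\Lambda^k(\fg/\fb)\otimes L(\lambda)$ lies strictly below the finite set $\{w\cdot\lambda \mid w\in W\}$, so no Verma module $M(w\cdot\lambda)$ can occur in the standard filtration of $\mathrm{pr}_{\chi_\lambda}\widetilde D_k$ and hence $D_k=0$. This is a one-line observation given what you have already established, but it needs to be said; the paper devotes the final sentence of its proof to exactly this point.
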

\begin{proof}
The first step of the construction is parallel to the classical case. We can define an exact complex of $\fg$-modules of the form
\begin{eqnarray*}
&&\cdots\to \U(\fg)\otimes_{\U(\fb)}(\Lambda^k\fg/\fb\otimes L(\lambda))\to\cdots\to\\
&&  \U(\fg)\otimes_{\U(\fb)}(\Lambda^1\fg/\fb\otimes L(\lambda))\to  \U(\fg)\otimes_{\U(\fb)}L(\lambda)\to L(\lambda)\to 0
\end{eqnarray*}
where the maps are given by the direct analogs of those in \cite{MR0578996}, or Section 6.3 in \cite{MR2428237}, see also equation (4.1) in \cite{Cheng}. In fact it suffices to do this for $L(\lambda)$ trivial, since a straightforward tensor product can be taken afterwards.

Now we can restrict the resolution to the block of the category $\cO$ corresponding to the central character $\chi_\lambda$, which still yields an exact complex. Lemma \ref{extzero} then implies that the modules which appear must be of the proposed form.

It remains to be proved that the resolution is finite. This follows from the observation that for $k$ large enough all the weights appearing in $C_k(\ofn,L(\Lambda))$ are lower than those in the set $\{w(\lambda+\rho)-\rho|w\in W\}$.
 \end{proof}
 
Now we can prove Theorem \ref{resolosp12n}. Contrary to the classical case in \cite{MR0578996}, where the BGG resolutions are constructed to obtain an alternative derivation for the Bott-Kostant cohomology groups, we will need our result on the $\ofn$-homology to derive the BGG resolutions.
\begin{proof}
Since the modules appearing in the resolution in Lemma \ref{preBGG} have a filtration in terms of Verma modules, this corresponds to a projective resolution in the category of $\ofn$-modules. This can therefore be applied to calculate the right derived functors of the left exact contravariant functor $\Hom_{\ofn}(-,\C)$ acting on $L(\lambda)$, see \cite{MR1269324}. These functors satisfy $\Ext^k_{\ofn}(L(\lambda),\C)=H_k(\ofn,L(\lambda))^\ast$, see Lemma 4.6 and Lemma 4.7 in \cite{BGG}. 
By applying this we obtain that the homology $H_k(\ofn,L(\lambda))$ is equal to the homology of the finite complex of $\fh$-modules
\[\cdots\to D_k/(\ofn \,D_k)\to\cdots\to D_1/(\ofn\,D_1)\to D_0/(\ofn\, D_0)\to  0,\]
where the maps are naturally induced from the ones in Lemma \ref{preBGG}.

The $\fh$-modules $D_k/(\ofn \,D_k)$ are exactly given by all the highest weights of the Verma modules appearing in the standard filtration of $D_k$. We take the largest $k$ such that the filtration of $D_k$ contains a Verma module of highest weight $w\cdot\lambda$ with $l(w)<k$. Since such a weight can not be in $H_k(\ofn,L(\lambda))$ by Theorem \ref{Kostantosp12n}, it is not in the kernel of the mapping $ D_k/(\ofn \,D_k)\to  D_{k-1}/(\ofn \,D_{k-1})$ (it is not in the image of $ D_{k+1}/(\ofn \,D_{k+1})\to  D_{k}/(\ofn \,D_{k})$ since we chose $k$ maximal). We fix such a $w\cdot\lambda$ for $D_k$ with minimal $l(w)$. According to Lemma \ref{preBGG} $M(w\cdot\lambda)$ is actually a submodule of $D_k$. Under the $\fg$-module morphism in Lemma \ref{preBGG} this submodule is mapped to a submodule in $D_{k-1}$. The highest weight vector of $M(w\cdot \lambda)$ is mapped to a highest weight vector in $D_{k-1}$. Since the projection onto $D_{k-1}/(\ofn D_{k-1})$ is not zero this highest weight vector is not inside another Verma module. This implies that the quotient of $D_{k-1}$ with respect to the image of $M(w\cdot \lambda)$ still has a standard filtration. The appearance of $M(w\cdot \lambda)$ in $D_k$ and $D_{k-1}$ forms an exact subcomplex which can be quotiented out and according to Proposition \ref{subcomplex} the resulting complex is still exact. 

This procedure can be iterated until the resolution in Lemma \ref{preBGG} is reduced to a resolution of the form of Lemma \ref{preBGG} for which we use the same notations and where it holds that $S_k^{(j)}=0$ if $j>n^2-k$. In a similar step we can quotient out the Verma submodules of $S_k^{(n^2-k)}$ that do not contribute to $H_k(\ofn,L(\lambda))$. 

Then we can focus on the submodules $S_k^{(n^2-k)}\subset D_k$ of the resulting complex. Because of the link with the $\ofn$-homology each of the highest weight vectors of the Verma modules is not mapped to the highest weight vector of a Verma module in the filtration of $D_{k-1}$. Theorem \ref{thmMusson} implies that the image of a Verma module in $S_k^{(n^2-k)}$ under the composition of the map in Lemma \ref{preBGG} with the projection onto $D_{k-1}/S_{k-1}^{(n^2-k+1)}$ must be zero since the filtration of $D_{k-1}/S_{k-1}^{(n^2-k+1)}$ contains only Verma modules with highest weight $u\cdot\lambda$ with $l(u)\ge k$. So $S_k^{(n^2-k)}$ gets mapped to $S_{k-1}^{(n^2-k+1)}\subset D_{k-1}$, and thus there is a subcomplex of the desired form in Theorem \ref{resolosp12n}. The complex originating from quotienting out this subcomplex is exact, which can again be seen from the connection with $\ofn$-homology. Therefore we obtain that the subcomplex of the modules $S_k^{(n^2-k)}$ must also be exact and Theorem \ref{resolosp12n} is proven.
\end{proof}

\section{Bott-Borel-Weil theory}
\label{secBBW}

In this section we use the algebraic reformulation of the result of Bott, Borel and Weil in \cite{Bott} for algebraic groups to describe the Bott-Borel-Weil theorem for the algebraic supergroup $OSp(1|2n)$. This rederives the result for $\mathfrak{osp}(1|2n)$ in Theorem 1 in \cite{MR0957752}.

\begin{theorem}
\label{BBWosp12n}
Consider $\fg=\mathfrak{osp}(1|2n)$ and $\C_{\lambda}$ the irreducible $\fb$-module with $\fh \C_{-\lambda}=-\lambda(\fh)\C_{-\lambda}$.
\begin{itemize}
\item If $\lambda$ is regular, there exists a unique element of the Weyl group $W$ rendering $\Lambda:=w(\lambda+\rho)-\rho$ dominant. In this case
\[H^k(G/B,G\times_B\C_{-\lambda})=\begin{cases}L({\Lambda})&\mbox{if} \quad |w|=k\\ 0 &\mbox{if}\quad  |w|\not=k  \end{cases}.\]
\item If $\lambda$ is not regular, $H^k(G/B,G\times_B\C_{-\lambda})=0$.
\end{itemize}
\end{theorem}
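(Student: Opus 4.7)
The idea is to combine the Peter-Weyl theorem with Theorem~\ref{Kostantosp12n}. Because the category of finite dimensional $\mathfrak{osp}(1|2n)$-modules is semisimple (as recalled in the introduction), the Peter-Weyl theorem yields a $(G,G)$-bimodule decomposition
\[\mathcal{A}(G)\;\cong\;\bigoplus_{\Lambda\in\cP^+} L(\Lambda)\otimes L(\Lambda)^*.\]
The algebraic reformulation of sheaf cohomology on $G/B$, due to Demazure in the classical setting, uses only the local nilpotency of $\fn$ and semisimplicity of $\fh$ on $\mathcal{A}(G)$, and so transfers to $OSp(1|2n)$ with only cosmetic changes. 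Combined with the Peter-Weyl decomposition and Frobenius reciprocity for derived induction, it produces an isomorphism of $G$-modules
\[H^k(G/B,\,G\times_B\C_{-\lambda})\;\cong\;\bigoplus_{\Lambda\in\cP^+} L(\Lambda)\otimes \Ext^k_{\fb}(L(\Lambda),\C_{-\lambda}),\]
and a Hochschild--Serre argument identifies $\Ext^k_{\fb}(L(\Lambda),\C_{-\lambda})$ with the $\lambda$-weight space of $H^k(\fn,L(\Lambda)^*)$.

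Next we feed in Theorem~\ref{Kostantosp12n}. The Weyl group of $\mathfrak{osp}(1|2n)$ coincides with that of $\mathfrak{sp}(2n)$, whose longest element is $w_0=-\mathrm{id}$; consequently $w_0\rho=-\rho$ and $L(\Lambda)^*\cong L(\Lambda)$ as $\fg$-modules. Theorem~\ref{Kostantosp12n} then gives
\[H^k(\fn,L(\Lambda)^*)\;\cong\;\bigoplus_{w\in W(k)}\C_{w\cdot\Lambda},\]
so the $\lambda$-weight space is one dimensional exactly when $w\cdot\Lambda=\lambda$ for some $w\in W(k)$, and zero otherwise. Passing to $u=w^{-1}$, the non-vanishing condition becomes $u\cdot\lambda=\Lambda\in\cP^+$ with $|u|=k$.

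If $\lambda$ is regular, then $\lambda+\rho$ has trivial $W$-stabiliser and there is a unique $u\in W$ satisfying $\Lambda:=u\cdot\lambda\in\cP^+$, which contributes the single non-vanishing summand $L(\Lambda)$ in cohomological degree $k=|u|$. If $\lambda$ is singular, then $\lambda+\rho$ lies on a reflection hyperplane, whereas every $\Lambda+\rho$ with $\Lambda\in\cP^+$ is strictly dominant and hence $W$-regular; the two $W$-orbits are disjoint, and every $H^k$ vanishes. The main obstacle will be making the algebraic reformulation in the first step precise in the super setting and keeping careful track of the $w_0$-twist between $L(\Lambda)$ and $L(\Lambda)^*$; both are classical manipulations that transfer without essential change, because typicality of every block of $\mathfrak{osp}(1|2n)$ makes the representation theory directly parallel to the reductive case.
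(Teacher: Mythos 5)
Your proposal is correct and follows essentially the same route as the paper: reduce sheaf cohomology on $G/B$ to $\fb$-extension groups involving the algebra of regular functions, apply the Peter--Weyl decomposition (valid by semisimplicity of the finite dimensional category), and then read off the answer from Theorem~\ref{Kostantosp12n}. The only cosmetic difference is that you phrase the intermediate step via $\Ext^k_{\fb}(L(\Lambda),\C_{-\lambda})$ and the self-duality $L(\Lambda)^*\cong L(\Lambda)$, whereas the paper works directly with $\Ext^k_{\fb}(\C,\C_{-\lambda}\otimes\cR)=\bigoplus_{\Lambda}\Hom_{\fh}(\C_\lambda,H^k(\fn,L(\Lambda)))\,L(\Lambda)$.
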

\begin{proof}
For any $\fb$-module the holomorphic sections of the flag manifold satisfy $H^0(G/B,G\times_BV)=\Hom_{\fb}(\C,V\otimes \cR)$ with the $\fg\times\fg$-module $\cR$ given by the algebra of regular functions (the finite dual of the Hopf algebra $\U(\fg)$) on $OSp(1|2n)$, see the proof of Lemma 2 in \cite{MR2734963}. This algebra corresponds to the finite dual of the super Hopf algebra $\U(\fg)$. The derived functors therefore satisfy $H^k(G/B,G\times_BV)=\Ext^k_{\fb}(\C,V\otimes \cR)$. Since $\cR$ corresponds to the algebra of matrix elements, see Lemma 3.1 in \cite{MR2059616} and the category of finite dimensional $\mathfrak{osp}(1|2n)$-representations is semisimple the Peter-Weyl type theorem
\[\cR=\bigoplus_{\Lambda\in\cP^+}L(\Lambda)\times L(\Lambda)\]
follows immediately. Therefore BBW theory is expressed as
\[H^k(G/B,G\times_B \C_{-\lambda})=\bigoplus_{\Lambda\in\cP^+}\Hom_{\fh}(\C_{\lambda},H^k(\fn,L(\Lambda)))L(\Lambda).\]
The result then follows from Theorem \ref{Kostantosp12n}.
\end{proof}

\section{Projective dimension in $\cO$ of simple and Verma modules}
\label{secpd}

In this section we calculate projective dimensions of simple and Verma modules in $\cO$, which also gives the global dimension of the category $\cO$. For semisimple Lie algebras this was obtained by Mazorchuk in a general framework to calculate projective dimensions of structural modules in \cite{MR2366357}. Part of this approach extends immediately to $\mathfrak{osp}(1|2n)$, where the global dimension can actually be calculated from the BGG resolutions in Theorem \ref{resolosp12n}. However here, we follow an approach similar to the classical one sketched in Section 6.9 in \cite{MR2428237}.

\begin{theorem}
\label{projdim}
For $\fg=\mathfrak{osp}(1|2n)$ and $\lambda\in\cP^+$, the following equalities on the projective dimensions hold:
\begin{eqnarray*}
(i)&& p.d. M(w\cdot \lambda)= l(w)\\
(ii)&& p.d. L(w\cdot \lambda)= 2n^2-l(w)\\
(iii)&& gl.d. \cO_{\chi_\lambda}=2n^2.
\end{eqnarray*}
\end{theorem}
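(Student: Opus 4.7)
The plan is to prove (i) by induction on $l(w)$, derive (ii) by downward induction using the long $\Ext$-sequence attached to $0 \to N \to M(w\cdot\lambda) \to L(w\cdot\lambda) \to 0$, and read off (iii) directly from (ii). The scheme follows Section 6.9 of \cite{MR2428237}, with Musson's BGG theorem (Theorem \ref{thmMusson}) and the $\Ext$-corollary to Theorem \ref{Kostantosp12n} supplying the inputs specific to $\mathfrak{osp}(1|2n)$.

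For (i), the lower bound $p.d.\, M(w\cdot\lambda) \geq l(w)$ is immediate from the corollary following Theorem \ref{Kostantosp12n}, which exhibits $\Ext^{l(w)}_\cO(M(w\cdot\lambda), L(\lambda)) \cong \C$. For the matching upper bound I would induct on $l(w)$. The base $w = e$ is $M(\lambda)=P(\lambda)$ (Lemma \ref{projO}), since $\lambda\in\cP^+$. For $l(w)\geq 1$, start from the projective cover $P(w\cdot\lambda)\tto M(w\cdot\lambda)$; BGG reciprocity $(P(w\cdot\lambda):M(u\cdot\lambda))=[M(u\cdot\lambda):L(w\cdot\lambda)]$ together with Theorem \ref{thmMusson} forces $u\le w$ in the Bruhat order, with $M(w\cdot\lambda)$ appearing exactly once at the top of the standard filtration of $P(w\cdot\lambda)$. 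The first syzygy therefore has a standard filtration by Vermas $M(u\cdot\lambda)$ with $u<w$ strictly and hence $l(u)\le l(w)-1$; by the inductive hypothesis each such Verma has $p.d.\le l(w)-1$, and the long exact $\Ext$-sequences force $p.d.\, M(w\cdot\lambda)\le l(w)$.

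For (ii), I would induct downward on $l(w)$. The base $w=w_0$ uses that no weight is strongly linked strictly below $w_0\cdot\lambda$, so Theorem \ref{thmMusson} forces $M(w_0\cdot\lambda)=L(w_0\cdot\lambda)$ and (i) gives $p.d.\, L(w_0\cdot\lambda)=l(w_0)=n^2=2n^2-l(w_0)$. For $l(w)<n^2$, apply $\Ext^\bullet_\cO(-,Y)$ to $0\to N\to M(w\cdot\lambda)\to L(w\cdot\lambda)\to 0$. By Theorem \ref{thmMusson} the composition factors of the maximal submodule $N$ are $L(u\cdot\lambda)$ with $l(u)>l(w)$, so by induction $p.d.\, N\le 2n^2-l(w)-1$; combined with $p.d.\, M(w\cdot\lambda)=l(w)\le 2n^2-l(w)$ from (i), the long exact sequence yields $p.d.\, L(w\cdot\lambda)\le 2n^2-l(w)$. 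For the matching lower bound, I would produce a nonzero class in $\Ext^{2n^2-l(w)}_\cO(L(w\cdot\lambda),L(\mu))$ for appropriate $\mu$, most naturally by dualising the BGG resolution of Theorem \ref{resolosp12n} via the contravariant duality on $\cO$ (which fixes simples), so that the $\Ext$-computation following Theorem \ref{Kostantosp12n} can be brought to bear on the other side of the pairing.

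Part (iii) is then immediate: $gl.d.\, \cO_{\chi_\lambda}=\max_{w\in W}p.d.\, L(w\cdot\lambda)=2n^2$, realised at $w=e$. The main obstacle I anticipate is the lower bound in (ii): the upper-bound cascade propagates transparently once (i) is in hand, but exhibiting a nonvanishing maximal $\Ext$-class requires either a dualised BGG resolution or a tilting-module argument beyond what is assembled in Sections \ref{Kostant}--\ref{secBGG}, because the $\Ext$-corollary to Theorem \ref{Kostantosp12n} only detects extensions into the dominant simple $L(\lambda)$ and not into the general interior simple $L(w\cdot\lambda)$.
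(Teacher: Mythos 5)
Your treatment of (i), of the upper bounds in (ii), and of (iii) coincides with the paper's: induction on $l(w)$ via the projective cover, BGG reciprocity (Lemma \ref{BGGreproc}) and Theorem \ref{thmMusson} for (i) --- the paper extracts the lower bound from the inequality $p.d.\,A\le\max\{p.d.\,B,\,p.d.\,C-1\}$ applied to the syzygy rather than from the $\Ext$-corollary, but your variant is clean and correct (note the base case $M(\lambda)=P(\lambda)$ is Lemma \ref{Vermaproj}, not Lemma \ref{projO}) --- and downward induction on $l(w)$ via $N'\hookrightarrow M(w\cdot\lambda)\tto L(w\cdot\lambda)$ for (ii).

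The point you flag as the main obstacle, the lower bound $p.d.\,L(w\cdot\lambda)\ge 2n^2-l(w)$, is indeed the crux, but the fix you propose is not the one the paper (following Section 6.9 of \cite{MR2428237}) intends, and as stated it does not go through. Dualising the BGG resolution only yields a costandard coresolution of the \emph{dominant} simple $L(\lambda)$; to extract a class in $\Ext^{2n^2-l(w)}_\cO(L(w\cdot\lambda),L(\mu))$ from it you would need $\Ext^\bullet_\cO(M(x\cdot\lambda),L(w\cdot\lambda))$ for non-dominant $w\cdot\lambda$, which is exactly the Kazhdan--Lusztig-type information that the corollary to Theorem \ref{Kostantosp12n} does not supply. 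The intended argument avoids exhibiting a top $\Ext$-class into an interior simple altogether: one establishes $p.d.\,N'=2n^2-l(w)-1$ exactly (the upper bound from the downward induction hypothesis applied to the composition factors of $N'$, all of the form $L(u\cdot\lambda)$ with $l(u)>l(w)$ by Theorem \ref{thmMusson}; the lower bound because a factor with $l(u)=l(w)+1$ occurs in the head of $N'$ and is the one realising the maximal projective dimension among the factors), and then feeds this into $p.d.\,N'\le\max\{p.d.\,M(w\cdot\lambda),\,p.d.\,L(w\cdot\lambda)-1\}$: since $p.d.\,N'=2n^2-l(w)-1>l(w)=p.d.\,M(w\cdot\lambda)$, this forces $p.d.\,L(w\cdot\lambda)\ge p.d.\,N'+1=2n^2-l(w)$. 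You should replace your dualisation step with this argument; be aware that pinning down the lower bound for $p.d.\,N'$ (i.e.\ a witness module against which its top $\Ext$-group does not vanish) is the one genuinely delicate point, which both the paper and \cite{MR2428237} treat briskly.
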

\begin{proof}
By Lemma \ref{Vermaproj}, statement $(i)$ is true for $w=1$, or $l(w)=0$. Then we proceed by induction on the length of $w$.

We use the general fact that if there is a short exact sequence of the form $A\hookrightarrow B \tto C$ then 
\[p.d. A\le \max\{p.d. B, p.d.C-1\}\qquad\mbox{and}\qquad p.d. C\le \max\{p.d. A+1, p.d.B\},\]
see \cite{MR2428237,MR1269324}.

Assume $(i)$ holds for all $w$ such that $l(w)<k$, then we take some $w\in W(k)$ and denote the kernel of the canonical morphism $P(w\cdot\lambda)\tto M(w\cdot\lambda)$ by $N$. The module $N$ has a standard filtration and the components can be obtained from the combination of Theorem \ref{thmMusson} and Lemma \ref{BGGreproc}. Therefore we obtain $p.d. N=l(w)-1$. The short exact sequence $N\hookrightarrow P(w\cdot\lambda)\tto M(w\cdot\lambda)$ implies $p.d. N \le p.d. M(w\cdot\lambda)-1$ and $p.d. M(w\cdot\lambda)\le p.d. N+1$ and we obtain $p.d.M(w\cdot\lambda)=k$.

This proves $(i)$. The result of $(i)$ implies $(ii)$ for $l(w)=n^2$ since then $M(w\cdot\lambda)=L(w\cdot\lambda)$ by Theorem \ref{thmMusson}. From this point on statement $(ii)$ can also be proved by induction, now using the short exact sequences of the form $N'\hookrightarrow M(w\cdot\lambda)\tto L(w\cdot\lambda)$ with $N'$ the unique maximal submodule of $M(w\cdot\lambda)$.

The result of $(ii)$ immediately implies $(iii)$.
\end{proof}

\begin{remark}
Since projective modules in the category $\cO$ have a standard filtration, see Lemma \ref{projO}, a projective resolution of $V$ provides a complex with homology $H_k(\ofn,V)$, for any basic classical Lie superalgebra. In particular it follows that the projective dimension of $V$ in the category $\cO$ is larger than or equal to the projective dimension as an $\ofn$-module. In fact, for $\mathfrak{osp}(1|2n)$, using the technique from the proof of Proposition 2 in \cite{MR2366357}, and the result in Theorem \ref{resolosp12n}, one obtains that the projective dimension in $\cO$ is at least twice the projective dimension as an $\ofn$-module. The result for $\fg=\mathfrak{osp}(1|2n)$ in Theorem \ref{projdim} exactly states that this bound is actually an equality.
\end{remark}

\begin{acknowledgement}
The author is a Postdoctoral Fellow of the Research Foundation - Flanders (FWO). The author wishes to thank Ruibin Zhang for fruitful discussions on this topic. \end{acknowledgement}

\section*{Appendix 1: Structure of the space of chains $C_\bullet(\ofn,V)$}
%
%

In this appendix we obtain some technical results about the spaces $C_\bullet(\ofn,V)$,  $R_\bullet(\ofn,V)$,  $W_\bullet(\ofn,V)$, $A_\bullet$ and $B_\bullet$ as introduced in Section \ref{Kostant}. Here $V$ is a finite dimensional $\mathfrak{osp}(1|2n)$-module, although the same results would hold for an arbitrary finite dimensional $\fh+\ofn$-module.

\begin{lemma}
\label{spacesAB}
The spaces $\{A_\bullet^{(j)}\}$ and $\{B_\bullet^{(k)}\}$ of Definition \ref{DefAB} are linearly independent. For $A_\bullet=\bigoplus_{j=1}^nA_\bullet^{(j)}$ and $B_\bullet=\bigoplus_{j=1}^nB_\bullet^{(j)}$ it holds that
\begin{eqnarray*}
W_\bullet(\ofn,V)=A_\bullet\oplus B_\bullet &\mbox{and}& A_k\cong B_{k-1}
\end{eqnarray*}
as $\fh$-modules  for $k\in\mathbb{N}$.
\end{lemma}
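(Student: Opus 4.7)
My plan is to work directly with basis monomials of $C_\bullet(\ofn,V)=\Lambda^\bullet \ofn \otimes V$. Fix a root-vector basis of $\ofn$ and a weight basis of $V$. Because the even root vectors (in particular each $Y_{2\delta_i}$) satisfy $Y_\alpha^{\wedge 2}=0$, they appear with multiplicity at most one in any nonzero basis monomial, whereas the odd vectors $Y_{\delta_i}$ satisfy $Y_{\delta_i}^{\wedge 2}\neq 0$ (odd elements commute in the super-sense) and may appear with arbitrary multiplicity. For a nonzero basis monomial $m\in W_\bullet(\ofn,V)$, let $j(m)$ be the smallest index $i\in\{1,\ldots,n\}$ for which $Y_{2\delta_i}$ is a wedge factor of $m$ or $Y_{\delta_i}$ occurs with multiplicity at least two; such an $i$ exists precisely because $m\notin R_\bullet(\ofn,V)$.

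To obtain linear independence of the $\{A_\bullet^{(j)}\}$, $\{B_\bullet^{(j)}\}$ and the decomposition $W_\bullet=A_\bullet\oplus B_\bullet$, I then sort basis monomials of $W_\bullet$ by the pair $(j(m),\tau(m))$, where $\tau(m)=A$ if $Y_{2\delta_{j(m)}}$ is not a factor of $m$ (so, by minimality of $j(m)$, $Y_{\delta_{j(m)}}^{\wedge 2}$ must be) and $\tau(m)=B$ otherwise. Unwinding Definition \ref{DefAB} shows that the monomials with $(j(m),\tau(m))=(j,A)$ form a basis of $A_\bullet^{(j)}$, and those with $(j(m),\tau(m))=(j,B)$ form a basis of $B_\bullet^{(j)}$. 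Since these monomial subsets are pairwise disjoint and together exhaust the basis of $W_\bullet$, both the linear independence claim and $W_\bullet(\ofn,V)=A_\bullet\oplus B_\bullet$ follow at once.

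For the isomorphism $A_k\cong B_{k-1}$, I define $\phi^{(j)}\colon A_k^{(j)}\to B_{k-1}^{(j)}$ on basis monomials by $\phi^{(j)}(Y_{\delta_j}^{\wedge 2}\wedge f)=Y_{2\delta_j}\wedge f$ and extend linearly. This is well-defined, for the constraint on $f$ imposed by membership in $A_k^{(j)}$ (no $Y_{2\delta_j}$, no $Y_{2\delta_i}$ or $Y_{\delta_i}^{\wedge 2}$ for $i<j$) is strictly stronger than the constraint needed for the output to lie in $B_{k-1}^{(j)}$. Conversely, any nonzero basis element $Y_{2\delta_j}\wedge g\in B_{k-1}^{(j)}$ has $g$ free of $Y_{2\delta_j}$ (otherwise $Y_{2\delta_j}^{\wedge 2}=0$ would annihilate the monomial), so $g$ automatically satisfies all conditions required in the definition of $A_k^{(j)}$, and $Y_{2\delta_j}\wedge g\mapsto Y_{\delta_j}^{\wedge 2}\wedge g$ provides a two-sided inverse. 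Since $Y_{\delta_j}^{\wedge 2}$ and $Y_{2\delta_j}$ both have $\fh$-weight $-2\delta_j$, the map $\phi^{(j)}$ is $\fh$-linear; summing over $j$ gives the desired isomorphism.

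The main subtlety to watch is the super bookkeeping in $\Lambda^\bullet\ofn$: the asymmetry $Y_{2\delta_j}^{\wedge 2}=0$ versus $Y_{\delta_j}^{\wedge 2}\neq 0$ is exactly what makes $\phi^{(j)}$ a bijection, since the apparent mismatch between the ``no $Y_{2\delta_j}$'' clause in the definition of $A^{(j)}$ and its absence in the definition of $B^{(j)}$ is resolved automatically for nonzero elements. Beyond this observation the proof is essentially combinatorial on monomials and requires no further machinery than Definition \ref{DefAB}.
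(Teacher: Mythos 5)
Your proof is correct and follows essentially the same route as the paper: sort the basis monomials of $W_\bullet(\ofn,V)$ by the least index $j$ at which $Y_{2\delta_j}$ or $Y_{\delta_j}^{\wedge 2}$ occurs (and by which of the two occurs there), which yields the direct-sum decomposition, and then use the weight-preserving, degree-lowering swap $Y_{\delta_j}^{\wedge 2}\wedge f\leftrightarrow Y_{2\delta_j}\wedge f$ for the isomorphism $A_k^{(j)}\cong B_{k-1}^{(j)}$. Your explicit observation that $Y_{2\delta_j}^{\wedge 2}=0$ forces the ``no $Y_{2\delta_j}$'' clause automatically on the $B$-side is exactly the point that makes the correspondence bijective, and your case analysis is if anything stated more cleanly than the paper's.
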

\begin{proof}
The monomials in the spaces $W_\bullet(\ofn,V)$, $A_\bullet$ and $B_\bullet$ form bases of these spaces. Therefore the proof can be written in terms of these monomials.

For every monomial in the span of the spaces $\{A_\bullet^{(j)}\}$ there is a certain $k$, such that it contains $Y^{\wedge 2}_{\delta_k}$ but no $Y_{2\delta_i}$ for $i\le k$, which separates this space from the span of the spaces $\{B_\bullet^{(j)}\}$. If for $a_j\in A_\bullet^{(j)}$, the element $\sum_{j=1}^n a_j$ is zero we can prove that every $a_j$ must be zero. If $k$ is the lowest number such that $a_k$ is not zero, then $a_k$ contains $Y^{\wedge 2}_{\delta_k}$ while none of the other terms contain this, therefore $a_k=0$.

Every monomial in $W_\bullet(\ofn,V)$ contains some term $Y_{2\delta_i}$ or some term $Y_{\delta_j}^{\wedge 2}$. If the lowest such $i$ is strictly lower than the lowest such $j$, this monomial is inside $A_\bullet$, if the lowest such $i$ is higher or equal to the lowest such $j$ the monomial is inside $B_\bullet$. This proves $W_\bullet(\ofn,V)=A_\bullet\oplus B_\bullet$.

Finally the morphism $A_k^{(j)}\to B_{k-1}^{(j)}$ defined by mapping $Y_{\delta_j}^{\wedge 2}\wedge f\to Y_{2\delta_j}\wedge f$ is clearly well-defined and bijective for every $j$.
\end{proof}

\begin{definition}
We introduce two subsets of the even positive roots $\Delta_{\oa}^+$ of $\mathfrak{osp}(1|2n)$
\begin{eqnarray*}
M=\{\delta_i-\delta_j|\forall i<j\}&\mbox{and}& P=\{\delta_i+\delta_j|\forall i<j\}.
\end{eqnarray*}
The grading $D$ on a monomial in $C_\bullet(\ofn,V)$ is defined as 
\begin{eqnarray*}
D(Y_{\alpha_1}\wedge\cdots\wedge Y_{\alpha_d}\otimes v)= \sharp \{\alpha_k\in M, k=1,\dots,d\}- \sharp \{\alpha_k\in P, k=1,\dots,d\}+D(v),
\end{eqnarray*}
where $D(v)=\sum_{i=1}^n\mu_i$ for $v$ a weight vector of weight $\sum_{i=1}^n\mu_i\delta_i$.
\end{definition}

Since the root vectors corresponding to the roots in $M$ and $P$ are even and $V$ is finite dimensional, the grading is finite and we define $\left(C_\bullet(\ofn,V)\right)[i]$ as the span of all the monomials $f$ that satisfy $D(f)=i$. Also for subspaces $L_\bullet\subset C_\bullet(\ofn,V)$ we set
\begin{eqnarray}
\label{Li}
\left(L_\bullet\right)[i]= \left(C_\bullet(\ofn,V)\right)[i] \cap L_\bullet
\end{eqnarray}

The following lemma follows immediately from the definition of the boundary operator.
\begin{lemma}
\label{neverrd}
The boundary operator $\delta^\ast:C_\bullet(\ofn,V)\to C_\bullet(\ofn,V)$ acting on a monomial $f$ with $D(f)=p$ yields $\delta^\ast f=\sum_j f_j$ for monomials $f_j$ that satisfy $D(f_j)\le p$.
\end{lemma}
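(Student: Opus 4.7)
The plan is to expand $\delta^{\ast}(f)$ explicitly on a monomial $f=Y_{\alpha_1}\wedge\cdots\wedge Y_{\alpha_d}\otimes v$ and to verify directly, on each resulting monomial, that the grading $D$ has not increased. First I would iterate the recursion defining $\delta^\ast$ to write $\delta^\ast f$ as a signed sum of two families of summands: \emph{action terms} of the form $Y_{\alpha_1}\wedge\cdots\wedge\widehat{Y_{\alpha_i}}\wedge\cdots\wedge Y_{\alpha_d}\otimes(Y_{\alpha_i}\cdot v)$, and \emph{bracket terms} in which some pair $Y_{\alpha_i}\wedge Y_{\alpha_j}$ is replaced by $[Y_{\alpha_i},Y_{\alpha_j}]$. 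Only pairs with $\gamma:=\alpha_i+\alpha_j\in\Delta^+$ contribute, and since odd root vectors behave symmetrically under the wedge one must also include the self-pair $\alpha_i=\alpha_j=\delta_k$, which produces $\gamma=2\delta_k$ via $[Y_{\delta_k},Y_{\delta_k}]=Y_{2\delta_k}$.

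Introduce the shorthand $d(\alpha)=1$ if $\alpha\in M$, $d(\alpha)=-1$ if $\alpha\in P$, and $d(\alpha)=0$ otherwise, and write $\sigma(\alpha)$ for the sum of the $\delta_i$-coefficients of $\alpha$. For an action term, deleting $Y_{\alpha_i}$ from the wedge alters the wedge-part of $D$ by $-d(\alpha_i)$, while $Y_{\alpha_i}\cdot v$ has weight $\mathrm{wt}(v)-\alpha_i$, shifting $D(v)$ by $-\sigma(\alpha_i)$. A one-line check over the four families of positive roots $M$, $P$, $\{2\delta_k:1\le k\le n\}$ and $\{\delta_k:1\le k\le n\}$ yields $-d(\alpha)-\sigma(\alpha)\in\{-1,-2\}$ in every case, so each action term strictly lowers $D$.

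For a bracket term the vector $v$ is untouched, so the change in $D$ equals $d(\gamma)-d(\alpha)-d(\beta)$, and the lemma reduces to the combinatorial inequality $d(\gamma)\le d(\alpha)+d(\beta)$ whenever $\alpha,\beta,\gamma\in\Delta^+$ with $\gamma=\alpha+\beta$. Inspecting the $\delta_i$-coefficients shows that only five types of unordered pair $\{\alpha,\beta\}$ of positive roots can have $\alpha+\beta$ positive: $\{M,M\}$ with $\gamma\in M$, $\{M,P\}$ with $\gamma\in P$ or $\gamma=2\delta_k$, $\{M,2\delta_k\}$ with $\gamma\in P$, $\{M,\delta_k\}$ with $\gamma=\delta_\ell$, and $\{\delta_k,\delta_\ell\}$ with $\gamma\in P$ or $\gamma=2\delta_k$. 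In each of these five cases the required inequality is immediate from the definition of $d$, the tightest instance being $\alpha=\beta=\delta_k$, where both sides equal $0$.

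Substantively there is no deep obstacle: the statement is combinatorial, and the authors themselves flag it as an immediate consequence of the definition of $\delta^\ast$. The only care needed is to keep track of the signs and parities inherited from the super Chevalley--Eilenberg differential, and in particular not to overlook the self-bracket of an odd root vector when enumerating bracket terms, which is precisely the source of the self-pair contribution to $\gamma=2\delta_k$ noted above.
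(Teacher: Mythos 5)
Your proof is correct: the paper states the lemma "follows immediately from the definition of the boundary operator" and gives no argument, and your explicit expansion into action terms and bracket terms, with the case check $-d(\alpha)-\sigma(\alpha)\in\{-1,-2\}$ and $d(\alpha+\beta)\le d(\alpha)+d(\beta)$, is precisely the verification being left to the reader. In particular you correctly identify the self-pair $[Y_{\delta_k},Y_{\delta_k}]=Y_{2\delta_k}$ as the only degree-preserving contribution, which is consistent with Lemma~\ref{calculation}.
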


The following calculation will be crucial for computing the cohomology.
\begin{lemma}
\label{calculation}
For $Y_{\delta_j}^{\wedge k}\wedge f\in C_\bullet(\ofn,V)$ the boundary operator acts as
\begin{eqnarray*}
\delta^\ast(Y_{\delta_j}^{\wedge k}\wedge f)&=&-\frac{1}{2}k(k-1)Y_{2\delta_j}\wedge Y_{\delta_j}^{\wedge k-2}\wedge f\\
&+&k(-1)^k Y_{\delta_j}^{\wedge k-1}\wedge Y_{\delta_j}\cdot f+(-1)^kY_{\delta_j}^{\wedge k}\wedge \delta^\ast f.
\end{eqnarray*}
\end{lemma}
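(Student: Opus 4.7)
The plan is to proceed by straightforward induction on $k$, relying on the recursive definition $\delta^\ast(Y\wedge g) = -Y\cdot g - Y\wedge\delta^\ast g$ from Section~\ref{secPrel}. Writing $Y := Y_{\delta_j}$ throughout, the base case $k=0$ is trivial (only the last summand of the claimed formula survives, and it reduces to $\delta^\ast f$), and $k=1$ is immediate from the definition since the first two summands carry coefficient zero.

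For the induction step I would split off the leftmost factor and apply the definition once,
\[\delta^\ast(Y^{\wedge k}\wedge f) \;=\; -\,Y\cdot(Y^{\wedge k-1}\wedge f)\;-\;Y\wedge\delta^\ast(Y^{\wedge k-1}\wedge f),\]
and then treat the two summands separately. The second summand will be handled by the induction hypothesis; after substituting, the identity $Y\wedge Y_{2\delta_j} = -Y_{2\delta_j}\wedge Y$ (even anticommutes with odd in the super-wedge) will be used to absorb the extra $Y$ into $Y^{\wedge k-2}$, and the identity $Y\wedge Y^{\wedge k-2} = Y^{\wedge k-1}$ (odd root vectors commute symmetrically) will take care of the middle term.

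For the first summand $-Y\cdot(Y^{\wedge k-1}\wedge f)$ the plan is to invoke the super-Leibniz rule for the parity-one operator $Y$,
\[Y\cdot(Y^{\wedge k-1}\wedge f) \;=\; (Y\cdot Y^{\wedge k-1})\wedge f \;+\; (-1)^{k-1}\,Y^{\wedge k-1}\wedge(Y\cdot f),\]
where the sign $(-1)^{k-1}$ is the parity of $Y^{\wedge k-1}$. A direct computation gives $Y\cdot Y^{\wedge k-1} = (k-1)\,Y_{2\delta_j}\wedge Y^{\wedge k-2}$: acting as a derivation at position $i$ produces the sign $(-1)^{i-1}$ (from the odd $Y$ passing the $i-1$ odd factors to its left), and pulling the resulting even element $[Y,Y]=Y_{2\delta_j}$ back to the front contributes the cancelling sign $(-1)^{i-1}$, so summing over $i$ from $1$ to $k-1$ yields the claim.

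Collecting contributions, the coefficient of $Y_{2\delta_j}\wedge Y^{\wedge k-2}\wedge f$ will become $-(k-1)-\tfrac12(k-1)(k-2)=-\tfrac12 k(k-1)$, that of $Y^{\wedge k-1}\wedge Y\cdot f$ will become $(-1)^k+(k-1)(-1)^k=k(-1)^k$, and that of $Y^{\wedge k}\wedge\delta^\ast f$ will be $(-1)^k$, reproducing the stated formula. The main obstacle is purely bookkeeping of signs in the super-wedge convention: keeping straight that odd root vectors commute while even root vectors anticommute with them. This sign pattern is precisely the mechanism that forces the even root vector $Y_{2\delta_j}$ to appear out of the odd product $Y_{\delta_j}^{\wedge k}$, and it is what distinguishes this computation from its $\mathfrak{so}(2n+1)$ analogue.
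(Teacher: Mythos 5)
Your proof is correct and follows essentially the same route as the paper: the paper also proceeds by induction on $k$ from the one-step identity $\delta^\ast(Y_{\delta_j}^{\wedge k}\wedge f)=-(k-1)Y_{2\delta_j}\wedge Y_{\delta_j}^{\wedge k-2}\wedge f+(-1)^kY_{\delta_j}^{\wedge k-1}\wedge Y_{\delta_j}\cdot f-Y_{\delta_j}\wedge\delta^\ast(Y_{\delta_j}^{\wedge k-1}\wedge f)$, which is exactly what your super-Leibniz expansion of $-Y\cdot(Y^{\wedge k-1}\wedge f)$ produces. You merely spell out the sign bookkeeping (the derivation computation $Y\cdot Y^{\wedge k-1}=(k-1)Y_{2\delta_j}\wedge Y^{\wedge k-2}$ and the coefficient collection) that the paper leaves as an "immediate calculation," and all of it checks out.
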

\begin{proof}
From the immediate calculation
\begin{eqnarray*}
\delta^\ast(Y_{\delta_j}^{\wedge k}\wedge f)&=&-(k-1)Y_{2\delta_j}\wedge Y_{\delta_j}^{\wedge k-2}\wedge f\\
&+&(-1)^k Y_{\delta_j}^{\wedge k-1}\wedge Y_{\delta_j}\cdot f-Y_{\delta_j}\wedge\delta^\ast (Y_{\delta_j}^{\wedge k-1}\wedge f)
\end{eqnarray*}
the statement can be proven by induction on $k$.
\end{proof}

The previous results can now be brought together to come to the main conclusion of this appendix. The following result states that the coboundary operator maps the subspaces $A_\bullet$ bijectively to spaces isomorphic with $B_\bullet$.
\begin{theorem}
\label{thmisom}
The morphism
\begin{eqnarray*}
\phi: A_\bullet\to C_\bullet(\ofn,V)/(A_\bullet\oplus R_\bullet(\ofn,V))\cong B_\bullet
\end{eqnarray*}
given by the composition of the boundary operator $\delta^\ast :A_\bullet\to C_\bullet(\ofn,V)$ with the canonical projection onto $C_\bullet(\ofn,V)/(A_\bullet\oplus R_\bullet(\ofn,V))$ is an isomorphism.
\end{theorem}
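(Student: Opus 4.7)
The strategy is to exhibit $\phi$ as upper-triangular in two nested filtrations, with the innermost diagonal equal (up to a nonzero scalar) to the bijection from Lemma \ref{spacesAB}; by the standard triangular-matrix argument, $\phi$ is then itself an isomorphism. The outer filtration is by the index $j$ of the components $A^{(j)},B^{(j)}$, and the inner filtration (within fixed $j$) is by the number of factors of $Y_{\delta_j}$ present in a monomial.

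First I would apply Lemma \ref{calculation} to an element $a=Y_{\delta_j}^{\wedge k}\wedge g\in A^{(j)}$ written in reduced form (with $k\geq 2$ and $g$ containing no $Y_{\delta_j}$, no $Y_{2\delta_j}$, and no $Y_{2\delta_i}$ or $Y_{\delta_i}^{\wedge 2}$ for $i<j$):
\begin{eqnarray*}
\delta^\ast(Y_{\delta_j}^{\wedge k}\wedge g)&=&-\tfrac{k(k-1)}{2}\,Y_{2\delta_j}\wedge Y_{\delta_j}^{\wedge(k-2)}\wedge g\\
&&+\,k(-1)^kY_{\delta_j}^{\wedge(k-1)}\wedge(Y_{\delta_j}\cdot g)+(-1)^kY_{\delta_j}^{\wedge k}\wedge\delta^\ast g.
\end{eqnarray*}
A direct case analysis of the brackets $[Y_{\delta_j},Y_\alpha]$ in $\ofn$ shows they produce only $Y_{2\delta_j}$ (for $\alpha=\delta_j$), $Y_{\delta_j+\delta_m}$ for $m\neq j$, or $Y_{\delta_l}$ for $l<j$; in particular they never produce $Y_{2\delta_l}$ for any $l\neq j$. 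Consequently every monomial of the middle term either acquires some $Y_{\delta_l}^{\wedge 2}$ (placing it in $A_\bullet$) or has no ``bad'' factor at all (placing it in $R_\bullet$), so it contributes nothing under the projection to $B_\bullet$. Classifying monomials of the third term by their lowest ``bad'' index, the parts that survive projection to $B_\bullet$ all lie in $B^{(l)}$ with $l\leq j$. Combined with the first term, which lies directly in $B^{(j)}$, this yields $\phi(A^{(j)})\subseteq\bigoplus_{l\leq j}B^{(l)}=:B^{(\leq j)}$, i.e.\ the outer upper-triangularity with respect to the filtration by $j$.

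For the diagonal component $\phi^{(j,j)}:A^{(j)}\to B^{(j)}$, decompose $A^{(j)}=\bigoplus_{k\geq 2}A^{(j)}_k$ and $B^{(j)}=\bigoplus_{m\geq 0}B^{(j)}_m$ by the number of $Y_{\delta_j}$ factors. Term 1 sends $A^{(j)}_k$ onto $B^{(j)}_{k-2}$ via the Lemma \ref{spacesAB} bijection, scaled by the nonzero constant $-k(k-1)/2$. The $Y_{2\delta_j}$ appearing in the $B^{(j)}$-part of the third term must come from some bracket $[Y_\alpha,Y_\beta]=cY_{2\delta_j}$ applied inside $g$; since $g$ contains no $Y_{\delta_j}$, the only such bracket available is $[Y_{\delta_j+\delta_k},Y_{\delta_j-\delta_k}]=cY_{2\delta_j}$ for some $k>j$, which involves no $Y_{\delta_j}$. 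Hence the $B^{(j)}$-part of term 3 preserves the $Y_{\delta_j}$-count and sends $A^{(j)}_k$ into $B^{(j)}_k$. With respect to the decreasing filtration $A^{(j),\geq K}:=\bigoplus_{k\geq K}A^{(j)}_k$ and $B^{(j),\geq K}:=\bigoplus_{m\geq K}B^{(j)}_m$, the map $\phi^{(j,j)}$ carries $A^{(j),\geq K}$ into $B^{(j),\geq K-2}$ and, modulo $B^{(j),\geq K-1}$, reduces to the term-1 bijection (since the term-3 contribution lies in $B^{(j)}_K\subset B^{(j),\geq K-1}$). Thus $\phi^{(j,j)}$ is an isomorphism, and together with the outer triangularity so is $\phi$. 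The property $A_\bullet\cap\ker\delta^\ast=\{0\}$ follows immediately from injectivity of $\phi$, since any $a\in A_\bullet\cap\ker\delta^\ast$ would satisfy $\phi(a)=0$.

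The main obstacle is the careful bracket enumeration in $\ofn$ underpinning two key claims: (i) no $[Y_{\delta_j},Y_\alpha]$ produces $Y_{2\delta_l}$ for $l\neq j$, which localizes the middle term into $A\oplus R$; and (ii) the only brackets producing $Y_{2\delta_j}$ applicable to $g$ involve no $Y_{\delta_j}$, which ensures the two contributions to the diagonal live in distinct strata of the inner filtration. Both are features specific to the odd-root structure $\{\delta_i\}$ of $\mathfrak{osp}(1|2n)$ and require checking the nonzero brackets between positive root vectors one by one.
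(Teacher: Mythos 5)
There is a genuine gap in the outer-triangularity step, and it concerns precisely the term that the paper's grading $D$ was designed to handle. You claim that every monomial of the middle term $k(-1)^kY_{\delta_j}^{\wedge(k-1)}\wedge (Y_{\delta_j}\cdot g)$ lands in $A_\bullet\oplus R_\bullet(\ofn,V)$ and hence dies under the projection to $B_\bullet$. This is false when $k=2$ and $g$ contains a bad factor of index greater than $j$, which Definition \ref{DefAB} explicitly permits. Concretely, take $n=2$, $j=1$, $a=Y_{\delta_1}^{\wedge 2}\wedge Y_{2\delta_2}\otimes v\in A^{(1)}$: since $[Y_{\delta_1},Y_{2\delta_2}]=0$, the middle term reduces to $\pm 2\,Y_{\delta_1}\wedge Y_{2\delta_2}\otimes (Y_{\delta_1}\cdot v)$, a monomial containing a single $Y_{\delta_1}$ and the factor $Y_{2\delta_2}$, hence lying in $B^{(2)}$. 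So $\phi(A^{(1)})\not\subseteq B^{(\leq 1)}$, and since the third term genuinely contributes to $B^{(l)}$ with $l\leq j$ while the middle term contributes to $B^{(m)}$ with $m>j$, the map $\phi$ is triangular in neither direction with respect to the filtration by $j$ alone. (Your bracket enumeration itself is correct — the adjoint part of $Y_{\delta_j}\cdot$ creates no new $Y_{2\delta_l}$ — but the \emph{module-action} part of $Y_{\delta_j}\cdot g$ leaves the wedge factors of $g$, including any $Y_{2\delta_m}$ with $m>j$, untouched, and for $k=2$ the surviving single $Y_{\delta_j}$ is no longer a bad factor.)

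The paper circumvents exactly this by introducing the integer grading $D$ (counting roots in $M$ minus roots in $P$ plus the $\delta$-weight of the module vector). The operator $Y_{\delta_j}\cdot$ strictly lowers $D$ — both on the module factor and on each wedge factor it hits — so the entire middle term is subdiagonal for the $D$-filtration and vanishes from the associated graded map $\phi^{(l)}$; injectivity of each $\phi^{(l)}$ is then a leading-term argument in $(j,k)$ applied to the remaining two terms, surjectivity follows from the dimension count $A_k\cong B_{k-1}$ of Lemma \ref{spacesAB}, and Lemma \ref{neverrd} supplies the global triangularity. Your analysis of the first and third terms, and of the inner filtration by the $Y_{\delta_j}$-count, is compatible with this, but without the $D$-grading (or an equivalent device that separates the middle term from the other two) the induction on $j$ cannot be started, and the proof as written does not go through.
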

\begin{proof}
First we prove that the morphism $\phi^{(l)}$ given by $\phi$ acting on the restriction to $(A_\bullet)[l]$ (as defined in equation \eqref{Li}) composed with the restriction 
\[C_\bullet(\ofn,V)/(A_\bullet\oplus R_\bullet(\ofn,V))\to\left(C_\bullet(\ofn,V)/(A_\bullet\oplus R_\bullet(\ofn,V))\right)[l]\] is an isomorphism. We take a general element of $(A_\bullet)[l]$ and expand it according to the decomposition $A_\bullet=\bigoplus_{j=1}^n A_\bullet^{(j)}$ in Definition \ref{DefAB}:
\begin{eqnarray*}
h&=&\sum_{j=1}^n\sum_{k=2}^{N_j} Y_{\delta_j}^{\wedge k}\wedge h_{k}^{(j)}
\end{eqnarray*}
where $h_{k}^{(j)}$ does not contain $Y_{\delta_j}$, $Y_{2\delta_j}$ or $Y_{\delta_i}^{\wedge 2}$ and $Y_{2\delta_i}$ for $i< j$ and $D(h_{k}^{(j)})=l$. According to Lemma \ref{calculation} the action of $\delta^\ast$ combined with projection onto $C_\bullet(\ofn,V)[l]$ is given by
\begin{eqnarray*}
\left(\delta^\ast h\right)[l]&=&-\frac{1}{2}\sum_{j=1}^n\sum_{k=2}^{N_j} k(k-1) Y_{2\delta_j }\wedge Y_{\delta_j}^{\wedge k-2}\wedge h_{k}^{(j)}\\
&+&\sum_{j=1}^n\sum_{k=2}^{N_j}(-1)^k Y_{\delta_j}^{\wedge k}\wedge\left(  \delta^\ast h_{k}^{(j)}\right)[l]
\end{eqnarray*}
since degree of the monomials in the terms $Y_{\delta_j}^{\wedge k-1}\wedge Y_{\delta_j}\cdot h_{k}^{(j)}$ is strictly lower than $l$. Assume $p$ is the smallest number for which $h_{p}^{(n)}$ is different from zero and assume that $h\in\ker\phi^{(l)}$. The term $Y_{2\delta_n }\wedge Y_{\delta_n}^{\wedge p-2}\wedge h_{p}^{(n)}$ is not inside $A_\bullet\oplus R_\bullet(\ofn,V)$ and $h_{p}^{(n)}$ does not contain $Y_{\delta_n}$ or any $Y_{2\delta_i}$ or $Y_{\delta_i}^{\wedge 2}$. Therefore there is no other term appearing in $\left(\delta^\ast h\right)[l]$ to compensate this one and we obtain $h_{k}^{(n)}\equiv0$ for every $k$. Then from similar arguments we obtain by induction that $h_{k}^{(j)}\equiv0$ must hold for every $j$ and $k$, so $\phi^{(l)}$ is injective. The isomorphism $A_k\cong B_{k-1}$ from Lemma \ref{spacesAB}, which can clearly be refined to $\left(A_k\right)[l]\cong\left( B_{k-1}\right)[l]$, then shows that injectivity implies surjectivity. 

Lemma \ref{neverrd} implies that $\delta^\ast$ never raises degree, a property that is immediately inherited by $\phi$. The combination of this with the fact that the grading is finite, leads to the conclusion that $\phi$ is bijective since the $\{\phi^{(l)}\}$ are.
\end{proof}

\section*{Appendix 2: Category $\cO$ for basic classical Lie superalgebras}
The BGG category $\cO$ for a basic classical Lie superalgebra $\fg$ is the full subcategory of the category of $\fg$-modules of modules $M$ that satisfy the conditions:
\begin{itemize}
\item $M$ is a finitely generated $\U(\fg)$-module.
\item $M$ is $\fh$-semisimple.
\item $M$ is locally $\U(\fn)$-finite.
\end{itemize}
In this appendix we mention some properties of this category which are needed in Section \ref{secBGG} and Section \ref{secpd}. For more details on category $\cO$ for Lie (super)algebras, see \cite{MR0578996, MR2100468, MR2428237, MR2366357, preprint}. We use notations similar to the rest of the paper, but now for arbitrary basic classical Lie superalgebras.

The following results are due to Mazorchuk, see Proposition 1 and Theorem 2 in \cite{preprint}, or Brundan, see Theorem 4.4 in \cite{MR2100468}.
\begin{lemma}
\label{projO}
In the category $\cO$ for basic classical Lie superalgebras each irreducible representation $L(\mu)$ has a projective cover and each projective module in $\cO$ has a standard filtration.
\end{lemma}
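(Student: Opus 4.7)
My plan is to adapt the tensor-product construction of projective covers from the semisimple Lie algebra case (Bernstein-Gelfand-Gelfand, Humphreys) to the super setting, making the standard modifications needed for the $\mathbb{Z}/2$-grading.

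First, I would work inside truncated subcategories $\cO^{\le\nu}$ consisting of modules all of whose weights are bounded above by $\nu$ with respect to the standard partial order on $\fh^\ast$ (where $\mu\le\nu$ means $\nu-\mu$ is a non-negative integer combination of positive roots, even or odd). In such a truncated category with $\nu$ maximal, the Verma module $M(\nu)$ is projective: given any surjection $M\tto M(\nu)$ in $\cO^{\le\nu}$, a preimage of the canonical generator has weight $\nu$, which is maximal in $M$, so it must be annihilated by $\fn$ and therefore generates a copy of $M(\nu)$ by the universal property, yielding a splitting.

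Second, I would verify two compatibility facts. Tensoring with a finite-dimensional $\fg$-module preserves projectivity, via the adjunction $\Hom_\cO(P\otimes V,-)\cong\Hom_\cO(P,V^\ast\otimes-)$ combined with exactness of $V^\ast\otimes-$. Also, for any Verma module $M(\nu)$ and any finite-dimensional $V$, one has $M(\nu)\otimes V\cong\U(\fg)\otimes_{\U(\fb)}(\C_\nu\otimes V)$ (twist isomorphism, still valid in the super setting). Choosing a basis of weight vectors of $V$ ordered by decreasing weight produces a $\fb$-module filtration of $\C_\nu\otimes V$ with one-dimensional weight subquotients; since $\U(\fg)$ is free as a right $\U(\fb)$-module by the super PBW theorem, the induction functor $\U(\fg)\otimes_{\U(\fb)}-$ is exact and produces a Verma flag on $M(\nu)\otimes V$ whose subquotients are the Verma modules $M(\nu+\mu)$, with $\mu$ running through the weights of $V$ with multiplicity.

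Third, for an arbitrary $\mu\in\fh^\ast$, I would choose $\nu$ sufficiently antidominant and a finite-dimensional $V$ whose weights include $\mu-\nu$; then $M(\nu)\otimes V$ is projective in the truncated category, admits a Verma flag, and surjects onto $L(\mu)$. Decomposing into indecomposable summands (Krull-Schmidt applies since each block has finite length after truncation) and picking out the unique summand whose head is $L(\mu)$ produces the projective cover $P(\mu)$.

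The main obstacle is showing that an indecomposable direct summand of a module with a Verma flag again admits a Verma flag, which does not follow formally from the existence of the flag. The standard remedy, valid in the super case as well, is to characterise the class of modules with standard filtrations by the vanishing of $\Ext^1_\cO(-,N(\lambda))$ for all dual Verma modules $N(\lambda)$; this condition is manifestly inherited by direct summands. Establishing this $\Ext$-vanishing criterion in the super setting is the technical heart of the argument, and once in place it immediately gives the standard filtration on every $P(\mu)$, hence on every projective in $\cO$ (which is a finite direct sum of such covers).
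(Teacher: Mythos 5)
The paper does not prove this lemma itself but only cites Mazorchuk and Brundan, and your tensor-product construction (truncation to $\cO^{\le\nu}$, projectivity of Verma modules with maximal weight, the Verma flag on $M(\nu)\otimes V$ via the twist isomorphism and super PBW, Krull--Schmidt, and the $\Ext^1$-vanishing characterisation of modules with standard filtration) is exactly the route taken in those references. Your outline is correct; the one step you leave open, the criterion $\Ext^1_{\cO}(M,M(\lambda)^\vee)=0$ for all $\lambda$ characterising modules with a standard filtration, is indeed the technical core and does carry over to basic classical Lie superalgebras since the duality functor $\vee$ (used elsewhere in the paper for BGG reciprocity) is available there.
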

The projective cover of $L(\lambda)$ is denoted by $P(\lambda)$ and is also the projective cover of $M(\lambda)$.

\begin{lemma}[BGG reciprocity]
\label{BGGreproc} For a basic classical Lie superalgebra $\fg$ the following relation holds between the standard filtration of the projective module $P(\lambda)$ and the Jordan-H\"older series of the Verma module $M(\mu)$:
\[(P\left(\lambda):M(\mu)\right)=[M(\mu):L(\lambda)].\]
\end{lemma}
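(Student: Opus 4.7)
The plan is to imitate the classical duality-based derivation of BGG reciprocity. The key ingredient is the standard contravariant duality $M\mapsto M^{\vee}$ on $\cO$ for basic classical Lie superalgebras constructed in \cite{MR2100468}, which fixes each simple module ($L(\nu)^{\vee}\cong L(\nu)$), preserves formal characters (so $[M(\mu)^{\vee}:L(\nu)]=[M(\mu):L(\nu)]$), and sends the Verma module $M(\mu)$ to a ``dual Verma module'' $M(\mu)^{\vee}$ with simple socle $L(\mu)$.

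First I would establish the two orthogonality relations
\[\Hom_{\cO}\bigl(M(\nu),M(\mu)^{\vee}\bigr)=\C\,\delta_{\nu,\mu}\qquad\text{and}\qquad \Ext^{1}_{\cO}\bigl(M(\nu),M(\mu)^{\vee}\bigr)=0.\]
The Hom statement is immediate: a nonzero $\fg$-morphism must send the highest weight vector of $M(\nu)$ to an $\fn$-invariant vector of weight $\nu$ in $M(\mu)^{\vee}$, but the $\fn$-invariants of $M(\mu)^{\vee}$ coincide with its socle $L(\mu)$, forcing $\nu=\mu$. The vanishing of $\Ext^{1}$ is the subtler point; it can be extracted by realising $M(\mu)^{\vee}$ as an injective object in an appropriate truncated subcategory of $\cO$ (the full subcategory of modules whose weights are bounded above by $\mu$, in which the relevant $M(\nu)$ remain projective), in duality with the existence of projective covers of Verma modules granted by Lemma \ref{projO}.

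With these vanishings in hand, the result follows from a double count of $\dim\Hom_{\cO}(P(\lambda),M(\mu)^{\vee})$. Applying $\Hom_{\cO}(-,M(\mu)^{\vee})$ to the standard filtration $0=P_{0}\subset P_{1}\subset\cdots\subset P_{n}=P(\lambda)$ provided by Lemma \ref{projO}, with $P_{i}/P_{i-1}\cong M(\mu_{i})$, and using the $\Ext^{1}$-vanishing to collapse the resulting long exact sequences, an induction on $i$ yields
\[\dim\Hom_{\cO}\bigl(P(\lambda),M(\mu)^{\vee}\bigr)=\sum_{i=1}^{n}\dim\Hom_{\cO}\bigl(M(\mu_{i}),M(\mu)^{\vee}\bigr)=(P(\lambda):M(\mu)).\]
On the other hand, since $P(\lambda)$ is projective, $\Hom_{\cO}(P(\lambda),-)$ is exact, so applying it to a composition series of $M(\mu)^{\vee}$ and using that $\Hom_{\cO}(P(\lambda),L(\nu))=\C\,\delta_{\nu,\lambda}$ (because $P(\lambda)$ is the projective cover of $L(\lambda)$) produces
\[\dim\Hom_{\cO}\bigl(P(\lambda),M(\mu)^{\vee}\bigr)=[M(\mu)^{\vee}:L(\lambda)]=[M(\mu):L(\lambda)].\]
Equating the two expressions gives the claimed identity.

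The main obstacle will be the $\Ext^{1}$-vanishing above; in the Lie algebra case this is textbook via the truncated-category argument, but for basic classical Lie superalgebras one must verify that the truncation and duality still interact correctly, which should reduce to invoking the machinery developed in \cite{MR2100468, preprint}.
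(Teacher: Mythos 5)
Your argument is correct and follows essentially the same route as the paper: the paper also passes to $M(\mu)^{\vee}$, uses exactness of $\Hom_{\cO}(P(\lambda),-)$ to get $[M(\mu)^{\vee}:L(\lambda)]=\dim\Hom_{\cO}(P(\lambda),M(\mu)^{\vee})$, and then invokes the identity $\dim\Hom_{\cO}(P(\lambda),M(\mu)^{\vee})=(P(\lambda):M(\mu))$ via the same Hom/$\Ext^{1}$-orthogonality relations (deferred there to Theorem 3.7 of Humphreys). You have simply written out in more detail the step the paper cites.
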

\begin{proof}
This is a special case of Corollary 4.5 in \cite{MR2100468}, but can also easily be proved directly. Firstly, we have $[M(\mu):L(\lambda)]=[M(\mu)^\vee:L(\lambda)].$ For any module $M\in\cO$ it holds that $[M:L(\lambda)]=\dim\Hom_{\cO}\left(P(\lambda),M\right)$, since this is true for $M$ irreducible and $\Hom_{\cO}\left(P(\lambda),-\right)$ is an exact functor and thus preserves short exact sequences. Together this yields
\[[M(\mu):L(\lambda)]=\dim\Hom_{\cO}\left(P(\lambda),M(\mu)^\vee\right).\]

The statement then follows from $\dim\Hom_{\cO}\left(P(\lambda),M(\mu)^\vee \right)=(P(\lambda):M(\mu))$, which can be proved similarly as Theorem 3.7 in \cite{MR2428237}.
\end{proof}

If an integral dominant weight is the highest one inside the class of weights corresponding to a central character (which is always true for typical highest weights) we obtain the classical result that the corresponding Verma module is projective.
\begin{lemma}
\label{Vermaproj}
Suppose $\Lambda\in\cP^+$ is the highest weight inside the set $\{\mu\in\fh^\ast|\chi_\mu=\chi_\Lambda\}$, then $M(\Lambda)$ is a projective module in $\cO$.
\end{lemma}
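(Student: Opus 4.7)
The plan is to deduce that $M(\Lambda)$ coincides with the projective cover $P(\Lambda)$ by using BGG reciprocity, so that projectivity is automatic. By Lemma \ref{projO}, $P(\Lambda)$ exists in $\cO$ and carries a standard filtration; it is also the projective cover of $M(\Lambda)$. Thus it suffices to show that this filtration consists of a single copy of $M(\Lambda)$.

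I would compute the multiplicities $(P(\Lambda):M(\mu))$ by invoking Lemma \ref{BGGreproc}, which gives
\[(P(\Lambda):M(\mu)) = [M(\mu):L(\Lambda)].\]
So the question reduces to determining for which $\mu \in \fh^\ast$ the irreducible $L(\Lambda)$ can appear as a composition factor of $M(\mu)$. Two obvious necessary conditions will do the job. First, since every composition factor of $M(\mu)$ has central character $\chi_\mu$, the appearance of $L(\Lambda)$ forces $\chi_\mu = \chi_\Lambda$. Second, every weight of $M(\mu)$, and in particular the highest weight $\Lambda$ of $L(\Lambda)$, has the form $\mu$ minus a sum of positive roots, so $\mu \geq \Lambda$ in the standard partial order on $\fh^\ast$.

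The hypothesis that $\Lambda$ is the highest weight of the set $\{\mu \in \fh^\ast \mid \chi_\mu = \chi_\Lambda\}$ combines with these two conditions to force $\mu = \Lambda$. Since $[M(\Lambda):L(\Lambda)] = 1$, BGG reciprocity yields $(P(\Lambda):M(\mu)) = \delta_{\mu,\Lambda}$, so the standard filtration of $P(\Lambda)$ has length one and $P(\Lambda) \cong M(\Lambda)$. There is no real obstacle here; the argument is purely formal, and the only subtle point is to verify that the partial-order restriction on weights of $M(\mu)$ combines correctly with the central character restriction, which is exactly what the maximality hypothesis on $\Lambda$ is designed for.
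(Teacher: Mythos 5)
Your argument is correct as a formal deduction from Lemma \ref{projO} and Lemma \ref{BGGreproc}, but it takes a genuinely different route from the paper. The paper simply invokes the direct argument of Proposition 3.8 in \cite{MR2428237}: given an epimorphism $M\tto N$ and a map $M(\Lambda)\to N$, one lifts the image of the highest weight vector to a weight vector $m\in M_\Lambda$ lying in the $\chi_\Lambda$-component of $M$; since every weight of that component is $\le$ some $\mu$ with $\chi_\mu=\chi_\Lambda$, hence $\le\Lambda$ by the maximality hypothesis, $\fn\cdot m=0$ and $m$ generates the desired lift. This shows exactness of $\Hom_\cO(M(\Lambda),-)$ with no machinery beyond central characters and weights. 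Your route instead identifies $M(\Lambda)$ with $P(\Lambda)$ by computing $(P(\Lambda):M(\mu))=[M(\mu):L(\Lambda)]=\delta_{\mu,\Lambda}$, which is clean and short \emph{given} the existence of projective covers with standard filtrations and BGG reciprocity. The one caveat worth being aware of is logical order: in essentially all developments of category $\cO$ (including the classical one and the superalgebra versions cited in Appendix 2), the construction of projectives and the proof of BGG reciprocity themselves start from the projectivity of Verma modules with maximal highest weight in their central-character class. So while your proof is valid relative to the black boxes the paper provides, it would be circular in a self-contained treatment, whereas the paper's (i.e.\ Humphreys') argument is the one that actually bootstraps the theory. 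It is worth stating this dependence explicitly if you keep your version.
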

\begin{proof}
The proof does not change from the proof of Proposition 3.8 in \cite{MR2428237} because of the extra condition on $\chi_\Lambda$.
\end{proof}

%
%
%
\biblstarthook{
}

\end{document}